\documentclass[a4paper,12pt]{amsart}
\usepackage{amsmath}
\usepackage{amssymb}
\usepackage{fancybox}
\usepackage[dvips]{graphicx}
\usepackage{array}
\usepackage{url}

\def\thline{\noalign{\hrule height 1pt}}
\newtheorem{thm}{Theorem}[section]
\newtheorem{definition}[thm]{Definition}
\newtheorem{cor}[thm]{Corollary}
\newtheorem{lemma}[thm]{Lemma}
\newtheorem{prop}[thm]{Proposition}

\newtheorem*{rem}{Remark}


\title[Train track complex]
{Train track complex of once-punctured torus and 4-punctured sphere}
\author{Keita Ibaraki}
\address{Department of Mathematical and Computing Sciences,
Tokyo Institute of Technology, Tokyo 152-8552, Japan}
\email{ibaraki4@is.titech.ac.jp}
\keywords{Mapping class group, Train track, Curve complex}
\subjclass[2000]{57M60}

\date{\today}

\begin{document}

\begin{abstract}
Consider a compact oriented surface $S$ of genus $g \geq 0$ and $m \geq 0$ punctured.
The train track complex of $S$ which is defined by Hamenst\"adt
is a 1-complex whose vertices are isotopy classes of complete train tracks on $S$.
Hamenst\"adt shows that if $3g-3+m \geq 2$, 
the mapping class group acts properly discontinuously and cocompactly on the train track complex.
We will prove corresponding results for the excluded case, 
namely when $S$ is a once-punctured torus or a 4-punctured sphere. 
To work this out,
we redefinition of two complexes for these surfaces.
\end{abstract}

\maketitle

\tableofcontents

\section{Introduction}
Consider a compact oriented surface $S$ of genus $g \geq 0$ from which $m \geq 0$ points, 
so-called punctures, have been deleted. 
The {\itshape mapping class group} $\mathcal{M}(S)$ of $S$ is, by definition, the space of
isotopy classes of orientation preserving homeomorphisms of $S$.

There are natural metric graphs on which the mapping class group $\mathcal{M}(S)$ acts by isometries.
Among them, we will concern with the {\itshape curve complex} $\mathcal{C}(S)$ 
(or, rather, its one-skeleton) 
and {\itshape train track complex} $\mathcal{TT}(S)$.

In \cite{Ha:81}, Harvey defined the {\itshape curve complex} $\mathcal{C}(S)$ for $S$. 
The vertex of this complex is a free homotopy class of an essential simple closed curve on $S$, 
i.e. a simple closed curve which is neither contractible nor homotopic into a puncture. 
A $k$-simplex of $\mathcal{C}(S)$ is spun by a collection of $k+1$ vertices 
which are realized by mutually disjoint simple closed curves.

The {\itshape train track} which is an embedded 1-complex was invented by Thurston \cite{Th:80}
and provides a powerful tool for the investigation of surfaces and hyperbolic $3$-manifolds. 
A detailed account on train tracks can be found in the book \cite{PH:92} of Penner with Harer. 

Hamenst{\"a}dt defined in \cite{Ham:08} that 
a train track is called {\itshape complete} if it is a bireccurent 
and each of complementary regions is either a trigon or a once-punctured monogon. 
Hamenst{\"a}dt defined the {\itshape train track complex} $\mathcal{TT}(S)$ for $S$. 
Vertices of the train track complex are isotopy classes of complete train tracks on $S$.

Suppose $3g-3+m \geq 2$, i.e. S is a hyperbolic surfaces but neither a once-punctured torus nor 4-punctured sphere. 
Both the curve complex and the train track complex can be endowed with a path-metric
by declaring all edge lengths to be equal to 1.
In these cases,
there are a map $\Phi : \mathcal{TT}(S) \rightarrow \mathcal{C}(S)$ and a number $C > 0$ such that 
$d(\Phi(\tau), \Phi(\tau')) \leq Cd(\tau, \tau')$ for all complete train track $\tau, \tau'$ on $S$.

If $S$ is a once-punctured torus or a 4-punctured sphere,
then any essential simple closed curve must intersect, 
so that $\mathcal{C}(S)$ has no edge by definition.
However, Minsky \cite{Min:96} adopt a small adjustment in the definition for curve complex 
in these two particular case so that it becomes a sensible and familiar 1-complex:
two vertices are connected by an edge when the curves they represent have minimal intersection 
(1 in the case of a once-punctured torus, and 2 in the case of a 4-punctured sphere).
It turns out that in both cases, the complex is the {\itshape Farey graph} $\mathcal{F}$.

In addition, if $S$ is a once-punctured torus,
there is no complete train track under Hamenst{\"a}dt's definition
and $\mathcal{TT}(S)$ is homeomorphic to empty set.
We thus adopt here Penner's definition \cite{PH:92}, 
i.e. the train track $\tau$ is complete iff $\tau$ is bireccurent and 
is not a proper subtrack of any birecurrent train track.
These two definitions are equivalent except for the once-punctured torus.

Our main theorem is:

\begin{thm}\label{thm:main}
Suppose $S$ is a once-punctured torus or a 4-punctured sphere.
Then the train track complex $\mathcal{TT}(S)$ of $S$
is quasi-isometric to the dual graph of the Farey graph $\mathcal{F}$ (see Figure\ref{fig:dual}).
\end{thm}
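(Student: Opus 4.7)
\medskip\noindent\textbf{Proof plan.}

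My plan is to deduce Theorem~\ref{thm:main} from the \v{S}varc--Milnor lemma by showing that the mapping class group $\mathcal{M}(S)$ acts geometrically --- properly and cocompactly by isometries --- on each of $\mathcal{TT}(S)$ and the dual Farey graph $\mathcal{F}^{*}$. Both complexes are proper length spaces (locally finite graphs with unit edge lengths), so once the two actions are verified to be proper and cocompact the lemma yields $\mathcal{TT}(S) \sim_{\mathrm{QI}} \mathcal{M}(S) \sim_{\mathrm{QI}} \mathcal{F}^{*}$, which is the stated quasi-isometry.

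The action $\mathcal{M}(S) \curvearrowright \mathcal{F}^{*}$ is classical. When $S$ is the once-punctured torus, $\mathcal{M}(S) \cong \mathrm{SL}_2(\mathbb{Z})$ acts on $\mathcal{F}^{*}$ as its Bass--Serre tree for the amalgamated decomposition $\mathrm{SL}_2(\mathbb{Z}) \cong \mathbb{Z}/4 *_{\mathbb{Z}/2} \mathbb{Z}/6$, with finite vertex stabilizers and a single edge orbit. For the $4$-punctured sphere the analogous factorization of $\mathcal{M}(S)$ through $\mathrm{PSL}_2(\mathbb{Z})$ acting on the Farey tessellation gives the same proper and cocompact conclusion. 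In both cases the Milnor--\v{S}varc hypothesis for $\mathcal{F}^{*}$ is therefore immediate.

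The substantive step is the analogue in these two excluded cases of Hamenst\"adt's theorem, namely that $\mathcal{M}(S) \curvearrowright \mathcal{TT}(S)$ is proper and cocompact. I would verify local finiteness of $\mathcal{TT}(S)$ from the finiteness of elementary moves (splits and shifts) available at any $\tau$. Proper discontinuity reduces to finiteness of the stabilizer $\mathrm{Stab}_{\mathcal{M}(S)}(\tau)$ of each complete train track, which follows from the fact that a mapping class preserving $\tau$ induces a combinatorial automorphism of the finite underlying 1-complex (permuting branches, switches, and complementary regions) and so lies in a finite group. For cocompactness I would enumerate the isotopy classes of complete train tracks on $S$ modulo $\mathcal{M}(S)$ and check that the list is finite.

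The main obstacle is precisely this enumeration. Using Penner's definition on the once-punctured torus is essential --- Hamenst\"adt's definition leaves the vertex set empty --- and forces a reworking of the combinatorial classification in the presence of the non-trivalent switches that Penner's definition now permits. Listing the finitely many combinatorial types of complete train tracks and confirming that they exhaust $\mathcal{TT}(S)/\mathcal{M}(S)$ in each of the two surfaces is the chief technical content of the argument; once it is in place the \v{S}varc--Milnor lemma produces the desired quasi-isometry with no further work.
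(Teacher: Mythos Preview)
Your route is genuinely different from the paper's and, modulo one gap, workable. The paper does not invoke \v{S}varc--Milnor for Theorem~\ref{thm:main}; instead it constructs an explicit quasi-isometry. Each complete train track on $S$ carries exactly two vertex cycles, and these span an edge of the Farey graph; the paper's map $\varphi:\mathcal{TT}(S)\to E(\mathcal{F})$ sends $\tau$ to that edge. A direct analysis of how vertex cycles change under a single split shows that the induced graph on $E(\mathcal{F})$ is (essentially) the line graph of $\mathcal{F}^{*}$, hence quasi-isometric to $\mathcal{F}^{*}$, and that $\varphi$ has uniformly bounded fibres, so is itself a quasi-isometry. Only afterwards are the p.d.c.\ action and the quasi-isometry with $\mathcal{M}(S)$ recorded as corollaries. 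Your plan reverses this logic; it is conceptually cleaner and reuses the classical $\mathcal{M}(S)$--action on the Farey tessellation, but it forfeits the finer structural output the paper obtains along the way (for instance the identification of $\mathcal{TT}(S_{1,1})$ with a specific Cayley graph of $PSL(2,\mathbb{Z})$).

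The gap in your plan is connectedness of $\mathcal{TT}(S)$. The \v{S}varc--Milnor lemma needs a geodesic (hence path-connected) space, and local finiteness together with a p.d.c.\ action does not supply this: two disjoint $\mathcal{M}(S)$-equivariant copies of the same locally finite graph would satisfy all the hypotheses you list. The paper proves connectedness by hand (Lemma~\ref{lemma:connect} for $S_{1,1}$, Proposition~\ref{thm:connect} for $S_{0,4}$), exhibiting explicit splitting sequences linking representatives of the $\mathcal{M}(S)$-orbits; for $S_{0,4}$ this uses the full enumeration of the thirteen diffeomorphism types together with the check (Lemma~\ref{lemma:split}) that each lies within distance~$5$ of a fixed one. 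Your enumeration for cocompactness will have to be supplemented by exactly this kind of argument. A minor correction: Penner's definition does not ``permit non-trivalent switches''---complete train tracks are by definition generic, so all switches are trivalent. What changes on $S_{1,1}$ is the shape of the complementary region: Penner's maximality condition allows a once-punctured bigon, whereas Hamenst\"adt's condition (trigons and once-punctured monogons only) would leave the complex empty.
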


\begin{figure}[htbp]
  \begin{center}
    \includegraphics[keepaspectratio=true,width=75mm]{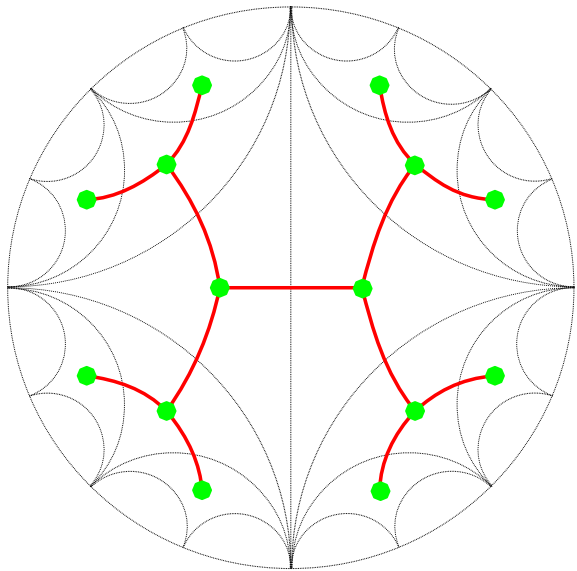}
  \end{center}
  \caption{dual of the Farey graph}
  \label{fig:dual}
\end{figure}

More precisely, if $S$ is a once-punctured torus, 
we show:
\begin{thm}\label{prop:torus}
The train track complex of a once-punctured torus is isomorphic to the Caley graph of
$PSL(2,\mathbb{Z}) = \langle r,l \mid (lr^{-1}l)^2=1, (lr^{-1})^3=1 \rangle$.
\end{thm}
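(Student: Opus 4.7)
The plan is to construct an explicit isomorphism between $\mathcal{TT}(S_{1,1})$ and the Cayley graph of $PSL(2,\mathbb{Z})$ with the generators $r$ and $l$. First I would use an Euler characteristic count to show that under Penner's definition, every complete train track on the once-punctured torus has exactly two trivalent switches and three branches joining them (a theta graph), and a single complementary region homeomorphic to a once-punctured bigon. In particular, all complete train tracks share this combinatorial type.

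Next I would identify the vertex set of $\mathcal{TT}(S_{1,1})$ with $PSL(2,\mathbb{Z})$. Since any two complete train tracks of the above type are related by a homeomorphism, $\mathcal{M}(S_{1,1}) \cong SL(2,\mathbb{Z})$ acts transitively on the vertex set. The hyperelliptic involution, corresponding to $-I \in SL(2,\mathbb{Z})$, fixes every isotopy class of essential simple closed curve on $S_{1,1}$ and consequently fixes every complete train track; a direct check confirms that the stabilizer of a fixed basepoint $\tau_0$ is exactly $\{\pm I\}$. Thus the orbit map $g \mapsto g\cdot \tau_0$ yields a bijection $PSL(2,\mathbb{Z}) = SL(2,\mathbb{Z})/\{\pm I\} \to \mathrm{Vert}(\mathcal{TT}(S_{1,1}))$.

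For the edges, I would enumerate the elementary moves at each complete train track and match them to the generators. At $\tau_0$ the incident edges come in finitely many types, and transporting them back by elements of $\mathcal{M}(S_{1,1})$ (via the bijection above) gives canonical elements of $PSL(2,\mathbb{Z})$ that I would show are precisely $r^{\pm 1}$ and $l^{\pm 1}$ of the presentation. The key computation is tracking the effect of an elementary move on the triple of carried weights, which via the Farey correspondence becomes a $2 \times 2$ matrix acting on the slopes in $\mathbb{Q} \cup \{\infty\}$. Finally, the only closed loops at $\tau_0$ arise from two local symmetries of a complete train track, namely an order-two exchange of its two switches and an order-three cyclic rotation of the three branches of the surrounding Farey triangle; these translate into exactly the relations $(lr^{-1}l)^2 = 1$ and $(lr^{-1})^3 = 1$, which are well known to present $PSL(2,\mathbb{Z}) \cong \mathbb{Z}/2 * \mathbb{Z}/3$.

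The main obstacle will be the bookkeeping in the third step: setting up a concrete model train track, carefully tracking how each elementary move modifies the three carried weights, and confirming that the resulting mapping classes agree with the algebraic generators $r, l$ of the given presentation (as opposed to conjugates or inverses). Once this matching is fixed, the transitivity in the second step combined with the well-known $\mathbb{Z}/2 * \mathbb{Z}/3$ structure of $PSL(2,\mathbb{Z})$ makes the final identification routine.
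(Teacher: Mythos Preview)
Your approach is sound but differs from the paper's. The paper first builds a 2-to-1 map $\varphi$ from $\mathcal{TT}(S_{1,1})$ onto an auxiliary graph $G_1$, identified as the line graph of the trivalent dual Farey tree, then observes that $\mathcal{TT}(S_{1,1})$ is recovered from $G_1$ by replacing each vertex by the two complete train tracks sharing that pair of vertex cycles; invoking the mapping class group action, it simply recognizes the resulting picture as the desired Cayley graph. You instead go directly through orbit--stabilizer: transitivity of the $SL(2,\mathbb{Z})$-action together with $stab(\tau_0)=\{\pm I\}$ gives a $PSL(2,\mathbb{Z})$-equivariant labeling of vertices, and matching the four neighbours of $\tau_0$ to $r^{\pm1}\tau_0,\,l^{\pm1}\tau_0$ then finishes the identification. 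Your route is more algebraic and self-contained; the paper's reuses the Farey-graph machinery already set up for its quasi-isometry theorem. One caution: your Step~4 is superfluous once Steps~2--3 are done, since a Cayley graph depends only on the group and the generating set, not on any presentation. Your description of the relator loops as ``local symmetries of a complete train track'' is also off: the order-two exchange of the two switches is realized by the hyperelliptic involution $-I$, which is trivial in $PSL(2,\mathbb{Z})$ and so contributes no loop in the Cayley graph; the genuine hexagons underlying $(lr^{-1}l)^2$ and $(lr^{-1})^3$ arise instead from the length-$3$ path joining the two tracks lying over a single Farey edge and from circulating a Farey triangle.
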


In \cite{Ham:08}, Hamenst{\"a}dt also shows that 
if $3g-3+m \geq 2$ the mapping class group 
$\mathcal{M}(S)$ acts p.d.c., i.e. properly discontinuously and cocompactly, 
on the train track complex $\mathcal{TT}(S)$
and $\mathcal{M}(S)$ is quasi-isometric to $\mathcal{TT}(S)$.
We can prove that the same is true for the once-punctured torus 
and 4-punctured sphere. 

\begin{cor}\label{cor:pdc}
Suppose $S$ is the once-punctured torus or 4-punctured sphere. 
Then $\mathcal{M}(S)$ acts p.d.c. on $\mathcal{TT}(S)$. 
\end{cor}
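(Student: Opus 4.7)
The plan is to verify proper discontinuity and cocompactness of the $\mathcal{M}(S)$-action on $\mathcal{TT}(S)$ separately, reducing each property to structure already established in the preceding theorems: Theorem \ref{prop:torus} for the once-punctured torus, and Theorem \ref{thm:main} for the 4-punctured sphere.

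For the once-punctured torus, Theorem \ref{prop:torus} identifies $\mathcal{TT}(S)$ with the Cayley graph of $PSL(2,\mathbb{Z})$. Since $\mathcal{M}(S)\cong SL(2,\mathbb{Z})$ and the hyperelliptic involution $-I$ acts trivially on isotopy classes of (unoriented) train tracks, the $\mathcal{M}(S)$-action factors through $PSL(2,\mathbb{Z})$ acting on its own Cayley graph by left multiplication. Left multiplication is simply transitive on vertices, so there is a unique vertex orbit (giving cocompactness) and every vertex stabilizer in $\mathcal{M}(S)$ equals the finite kernel $\{\pm I\}$ (giving proper discontinuity). Thus both properties are immediate in this case.

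For the 4-punctured sphere, I would handle proper discontinuity first: a complete train track $\tau$ has a finite number of switches and branches, and any element of $\mathcal{M}(S)$ stabilizing the isotopy class $[\tau]$ must permute this finite combinatorial skeleton, so $\mathrm{Stab}([\tau])$ embeds into a finite symmetric group. For cocompactness, I would exploit Theorem \ref{thm:main}. The quasi-isometry to the dual of the Farey graph should be $\mathcal{M}(S)$-equivariant by construction, since both $\mathcal{TT}(S)$ and the dual Farey graph carry natural $\mathcal{M}(S)$-actions and the comparison map is built from mapping-class-invariant combinatorics. Combined with the fact that $\mathcal{M}(S)$ surjects (with finite kernel) onto $PSL(2,\mathbb{Z})$, which acts transitively on vertices of the dual Farey graph, this forces only finitely many $\mathcal{M}(S)$-orbits on vertices of $\mathcal{TT}(S)$; since $\mathcal{TT}(S)$ is locally finite, cocompactness follows.

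The main obstacle is the cocompactness step for the 4-punctured sphere: one must check that the quasi-isometry of Theorem \ref{thm:main} genuinely intertwines the two $\mathcal{M}(S)$-actions (or, as a fallback, enumerate by hand the finitely many combinatorial types of complete train tracks on $S$ and check that $\mathcal{M}(S)$ acts transitively on each). Proper discontinuity, by contrast, is essentially a rigidity statement about finite combinatorial data and should require only a brief argument.
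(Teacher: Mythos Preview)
Your proposal is correct in outline but takes a more indirect route than the paper, and leans on an equivariance hypothesis that the paper never establishes and does not need.

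The paper's proof bypasses Theorem~\ref{prop:torus} and Theorem~\ref{thm:main} entirely. For $S_{1,1}$ it argues directly: all complete train tracks on $S_{1,1}$ lie in a single orientation-preserving $C^1$-diffeomorphism class (Figure~\ref{fig:11-tt-1}), so $\mathcal{M}(S_{1,1})$ acts transitively on $V(\mathcal{TT}(S_{1,1}))$, and coboundedness of an orbit gives cocompactness via Proposition~\ref{ex:bow}. Proper discontinuity comes from local finiteness of $\mathcal{TT}(S)$ together with finiteness of the stabilizer. For $S_{0,4}$ the paper again avoids the quasi-isometry: it observes that $\mathcal{M}(S_{0,4})$ acts transitively on $V(T_1)$ (the type-(1) train tracks of Figure~\ref{fig:04-tts}) and invokes Lemma~\ref{lemma:split}, which says every vertex of $\mathcal{TT}(S_{0,4})$ lies within distance~$5$ of $V(T_1)$, so a single orbit is cobounded.

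Your primary arguments require that the isomorphism of Theorem~\ref{prop:torus} and the quasi-isometry of Theorem~\ref{thm:main} intertwine the $\mathcal{M}(S)$-actions; you flag this yourself as the obstacle, and indeed neither theorem asserts it. Your fallback for $S_{0,4}$---enumerating the finitely many diffeomorphism classes and checking transitivity on each---is valid and is essentially what the paper does, except that the paper is more economical: rather than proving transitivity on all thirteen classes, it proves it on one class and uses Lemma~\ref{lemma:split} to absorb the rest into a bounded neighborhood. The paper's approach has the advantage of reusing structural lemmas already proved for Theorem~\ref{thm:main}, whereas your approach would give a cleaner standalone argument if the equivariance were verified.
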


\begin{cor}\label{cor:qi}
Suppose $S$ is the once-punctured torus or 4-punctured sphere. 
Then  $\mathcal{M}(S)$ is quasi-isometric to $\mathcal{TT}(S)$. 
\end{cor}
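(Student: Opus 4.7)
The plan is to deduce Corollary \ref{cor:qi} from Corollary \ref{cor:pdc} via the Švarc--Milnor lemma (the fundamental observation of geometric group theory): if a group $G$ acts properly discontinuously and cocompactly by isometries on a proper geodesic metric space $X$, then $G$ is finitely generated and the orbit map $g \mapsto g\cdot x_0$ is a quasi-isometry from $G$ (with any word metric) to $X$.

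The key steps, in order, would be the following. First, observe that $\mathcal{TT}(S)$ carries a natural path-metric making it a geodesic space, since it is a connected 1-complex all of whose edges have length $1$. Second, verify that $\mathcal{TT}(S)$ is a proper metric space; for a graph with the path-metric this amounts to local finiteness, i.e. that each complete train track has only finitely many neighbors under the edge relation (elementary moves such as splits and shifts applied at a single switch). In the once-punctured torus case this is immediate from Theorem \ref{prop:torus}, which identifies $\mathcal{TT}(S)$ with the Cayley graph of $PSL(2,\mathbb{Z})$ on the generators $r,l$; in the 4-punctured sphere case it follows by the same kind of combinatorial bookkeeping used elsewhere in the paper to enumerate the neighbors of a complete train track. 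Third, record that $\mathcal{M}(S)$ acts on $\mathcal{TT}(S)$ by isometries, since the action permutes vertices while preserving the relation of being connected by an edge. Fourth, invoke Corollary \ref{cor:pdc} to supply the properly discontinuous and cocompact hypothesis. Applying Švarc--Milnor then yields that $\mathcal{M}(S)$ is finitely generated and quasi-isometric to $\mathcal{TT}(S)$ via the orbit map based at any fixed complete train track.

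The only nontrivial ingredient is the properness of $\mathcal{TT}(S)$, i.e.\ the local finiteness of the 1-complex; everything else is essentially formal, or has already been established earlier in the paper. For the once-punctured torus this is absorbed into the explicit Cayley graph description, so no extra work is needed. For the 4-punctured sphere one should point out that the four complementary regions of a complete train track are either trigons or once-punctured monogons, and that the set of admissible elementary moves from a fixed complete train track is finite; this gives local finiteness and thus properness. Once that is in hand, Švarc--Milnor is applied as a black box and the corollary follows in one line.
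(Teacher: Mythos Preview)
Your proposal is correct and follows essentially the same route as the paper: establish that $\mathcal{TT}(S)$ is a proper geodesic space (via connectedness and local finiteness of the graph), invoke Corollary~\ref{cor:pdc} for the p.d.c.\ action, and then apply the \v{S}varc--Milnor lemma, which is exactly the paper's Theorem~\ref{thm:bow}. The paper's own proof is simply a terser version of what you wrote---it asserts local finiteness in one line rather than arguing it case by case---and the only cosmetic discrepancy is that edges in $\mathcal{TT}(S)$ here come from splits only (no shifts), which of course does not affect the local finiteness you need.
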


In Section \ref{sec:qi}, we give a brief review of quasi-isometries. 
In Section \ref{sec:ttc}, we describe train tracks and define the train track complex. 
In Section \ref{sec:farey}, we describe how to build a Farey graph which is used for 
curve complex of a once-punctured torus or a 4-punctured sphere. 
In Section \ref{sec:torus} and \ref{sec:4sphere}, we prove the Theorem\ref{thm:main}. 
Finally, we describe the action of mapping class groups 
on train track complexes in Section \ref{sec:act}. 

\section{Quasi-Isometry}\label{sec:qi}
A quasi-isometry is one of the fundamental notion in geometric group theory.
For details, see \cite{Bow:06}.

Let $(X,d)$ be a proper geodesic space, i.e. a complete and locally compact geodesic space.
Given $x \in X$ and $r \geq 0$, write $N(x, r) = \{ y \in X \mid d(x,y) \leq r \}$
for the closed $r$-neighborhood of $x$ in $X$.
If $A \subseteq X$, write $N(A, r) = \bigcup_{x \in A}N(x, r)$.
We say that $A$ is {\itshape cobounded} if $X = N(A,r)$ for some $r \geq 0$.

Suppose that a group $\Gamma$ acts on X by isometry.
Given $x \in X$, we write $\Gamma x = \{ gx \mid g \in \Gamma \}$ 
for the {\itshape orbit} of $x$ under $\Gamma$, 
and $stab(x)=\{ g \in \Gamma \mid gx = x \}$ for its {\itshape stabilizer}.

We say that the action of $\Gamma$ on $X$ is {\itshape properly discontinuous}
if for all $r \geq 0$ and all $x \in X$,
the set $\{ g \in \Gamma \mid d(x, gx) \leq r \}$ is finite.
A properly discontinuous action is called {\itshape cocompact}
if $X/\Gamma$ is compact.
We will frequently abbreviate ``properly discontinuous and cocompact'' by p.d.c.

\begin{prop}[\cite{Bow:06}]\label{ex:bow}
The followings are equivalent:
\begin{enumerate}
\item The action is cocompact,
\item Some orbit is cobounded, and
\item Every orbit is cobounded.
\end{enumerate}
\end{prop}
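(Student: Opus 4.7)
The plan is to establish the cyclic chain of implications $(1) \Rightarrow (3) \Rightarrow (2) \Rightarrow (1)$. The step $(3) \Rightarrow (2)$ requires nothing, since any single orbit witnesses the existential statement in (2), so the real content lies in the other two directions.

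For $(1) \Rightarrow (3)$, I would fix an arbitrary $x \in X$ and work with the quotient map $\pi : X \to X/\Gamma$, which is continuous when $X/\Gamma$ carries the quotient topology. Because $X$ is a geodesic space, the open balls $\{ y \in X : d(x,y) < r \}$ for $r > 0$ form an increasing open cover of $X$, and their images form an increasing open cover of $X/\Gamma$. Cocompactness, together with monotonicity of the cover, forces a single $r$ to suffice, so $\pi(\{ y : d(x,y) < r \}) = X/\Gamma$. Lifting this equation back to $X$, every $z \in X$ admits some $g \in \Gamma$ with $d(x, gz) < r$, equivalently $z \in N(\Gamma x, r)$, so $X = N(\Gamma x, r)$. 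Because the base point $x$ was arbitrary, every orbit is cobounded.

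For $(2) \Rightarrow (1)$, suppose $X = N(\Gamma x, r)$ for some $x \in X$ and $r \geq 0$. Properness of $X$ (complete plus locally compact for a geodesic space) guarantees that the closed $r$-ball $N(x, r)$ is compact, and the coboundedness hypothesis forces $\pi(N(x, r)) = X/\Gamma$, so $X/\Gamma$ is compact as the continuous image of a compact set. There is no real obstacle here; the only thing to keep track of is where the hypotheses on $X$ actually enter. Properness is used exactly in this last step, through compactness of closed metric balls, while proper discontinuity plays no role in the proposition itself even though it is part of the standing setup.
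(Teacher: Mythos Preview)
Your proof is correct and follows the same cyclic scheme $(\mathrm{i}) \Rightarrow (\mathrm{iii}) \Rightarrow (\mathrm{ii}) \Rightarrow (\mathrm{i})$ as the paper, with the step $(\mathrm{ii}) \Rightarrow (\mathrm{i})$ handled identically via properness of $X$ and $\pi(N(x,r)) = X/\Gamma$. The only cosmetic difference is in $(\mathrm{i}) \Rightarrow (\mathrm{iii})$: the paper equips $X/\Gamma$ with the quotient metric and argues ``compact $\Rightarrow$ bounded $\Rightarrow X/\Gamma = N'(\Gamma x, r)$'' before pulling back, whereas you extract a single radius directly from the increasing open cover of images of balls---same idea, slightly more self-contained packaging.
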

\begin{proof}
Write $N'(\Gamma x, r)$ for a closed $r$-neighborhood of $\Gamma x$ in $X/\Gamma$.
Let $\pi : X \rightarrow X / \Gamma$ be a quotient map,
Then for any $x \in X$ and any $r > 0$
$\pi(N(x,r))=N'(\Gamma x, r)$ and $\pi^{-1}(N'(\Gamma x, r))= N(\Gamma x, r)$.
\begin{itemize}
\item (iii) $\Rightarrow$ (ii) is clear.
\item Suppose that some orbit is cobounded.
So, $X=N(\Gamma x, r)$ for some $x \in X$ and some $r > 0$.
Thus, $X/\Gamma=\pi(X)=\pi(N(\Gamma x, r))=N'(\Gamma x, r)=\pi(N(x,r))$.
By Proposition 3.1 of \cite{Bow:06}, $N(x,r)$ is compact
and hence $X/\Gamma=\pi(N(x,r))$ is also compact.
Now we proved (ii) $\Rightarrow$ (i).
\item Suppose the action is cocompact.
So, $X/\Gamma$ is compact and hence $X/\Gamma$ is bounded.
Thus, for any $x \in X$ there is some $r>0$, such that $X/\Gamma=N'(\Gamma x,r)$.
Since $X=\pi^{-1}(X/\Gamma)=\pi^{-1}(N'(\Gamma x,r))=N(\Gamma x,r)$,
$\Gamma x$ is cobounded and (i) $\Rightarrow$ (iii) is shown.
\end{itemize}
\end{proof}

\begin{definition}[quasi-isometry]
Let $(X,d)$ and $(Y,d')$ be metric spaces.
A map $\varphi : X \rightarrow Y$ is called a quasi-isometry 
if there are constants $k_1 > 0, k_2,k_3,k_4 \geq 0$ such that for all $x_1, x_2 \in X$,
\[ k_1 d(x_1,x_2)-k_2 \leq d'(\varphi(x_1), \varphi(x_2)) \leq k_3 d(x_1,x_2)+k_4, \]
and the image $\phi(x)$ is cobounded in $Y$.
\end{definition}

Thus, a quasi-isometry is bi-Lipshitz with bounded error 
and its image is cobounded.
We note that the quasi-isometry introduces an equivalence relation on the set of metric spaces.

Two metric spaces, $X$ and $Y$, are said to be {\itshape quasi-isometric}
if there is a quasi-isometry between them.

Let $X$ be a geodesic space and $A$ a finite generating set for a group $\Gamma$.
Suppose $\Delta(\Gamma, A)$ is the Cayley graph of $\Gamma$ with respect to $A$.
If $B$ is another generating set for $\Gamma$,
then $\Delta(\Gamma, A)$ is quasi-isometric to $\Delta(\Gamma, B)$.
Thus, we simply denote the Cayley graph of $\Gamma$ by $\Delta(\Gamma)$
without specifying a generating set.
A group $\Gamma$ acts p.d.c. on its Cayley graph $\Delta(\Gamma)$.

We define that $\Gamma$ is quasi-isometric to $X$
if $\Delta(\Gamma)$ is quasi-isometric to $X$.
Also, two groups $\Gamma$, $\Gamma'$ are quasi-isometric
if $\Delta(\Gamma)$ is quasi-isometric to $\Delta(\Gamma')$.

The proof of the following claims can be found for example in \cite{Bow:06}:
\begin{thm}[\cite{Bow:06}]\label{thm:bow}
If $\Gamma$ acts p.d.c. on a proper geodesic space $X$,
then $\Gamma$ is quasi-isometric to $X$.
\end{thm}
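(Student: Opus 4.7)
The plan is to prove this via the \emph{\v{S}varc--Milnor construction}. I fix a basepoint $x_0 \in X$ and define the orbit map $\varphi : \Gamma \to X$ by $\varphi(g) = g x_0$. Since the Cayley graph of $\Gamma$ is, up to quasi-isometry, independent of the choice of finite generating set, it suffices to exhibit one specific finite generating set $A$ such that $\varphi$, viewed as a map from $\Delta(\Gamma, A)$ to $X$, is a quasi-isometry.

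To choose $A$, I would first invoke Proposition \ref{ex:bow} to extract from cocompactness a radius $R > 0$ with $X = N(\Gamma x_0, R)$; this immediately delivers coboundedness of the image $\varphi(\Gamma) = \Gamma x_0$. Next, set
\[ A = \{ g \in \Gamma \setminus \{e\} \mid d(x_0, g x_0) \leq 2R + 1 \}. \]
Proper discontinuity of the action forces $A$ to be finite. The upper Lipschitz estimate $d(x_0, g x_0) \leq (2R+1)\,|g|_A$ is then immediate from the triangle inequality along any $A$-word representing $g$.

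The heart of the argument is the lower Lipschitz estimate, and in particular checking that $A$ actually generates $\Gamma$. Given $g \in \Gamma$, I would pick a geodesic $\gamma : [0, \ell] \to X$ from $x_0$ to $g x_0$ with $\ell = d(x_0, g x_0)$ and subdivide it at parameters $0 = t_0 < t_1 < \dots < t_n = \ell$ with consecutive gaps of length at most $1$, so $n \leq \lceil \ell \rceil$. For each $i$, I pick $h_i \in \Gamma$ with $d(\gamma(t_i), h_i x_0) \leq R$, taking $h_0 = e$ and $h_n = g$. Then $d(x_0, h_i^{-1} h_{i+1} x_0) = d(h_i x_0, h_{i+1} x_0) \leq R + 1 + R$, so each $h_i^{-1} h_{i+1}$ lies in $A \cup \{e\}$. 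The telescoping product $g = (h_0^{-1}h_1)(h_1^{-1}h_2) \cdots (h_{n-1}^{-1}h_n)$ is therefore an $A$-word of length at most $n$, yielding $|g|_A \leq d(x_0, g x_0) + 1$ and, along the way, showing that $A$ generates $\Gamma$.

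The main obstacle is the chaining argument of the previous paragraph: it is exactly here that \emph{both} the genuine existence of geodesics in $X$ and cocompactness of the action are simultaneously essential, while properness (closed balls compact) is what makes proper discontinuity translate to finiteness of the relevant sets in the first place. Once the two Lipschitz inequalities and coboundedness of the image are assembled, they fit directly into the definition of quasi-isometry and the theorem follows.
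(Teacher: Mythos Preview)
Your argument is the standard \v{S}varc--Milnor proof and is correct as written: the choice of $A$, the chaining along a geodesic, and the telescoping product all work exactly as you describe under the paper's definitions. Note, however, that the paper does not actually supply its own proof of this theorem---it simply states the result and refers the reader to \cite{Bow:06}---so there is no in-paper argument to compare against; your write-up is precisely the proof one finds in that reference.
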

\begin{prop}\label{prop:bow}
Let $\Gamma$ be a finitely generated group.
Suppose that $G$ is a subgroup of $\Gamma$ of finite index.
Then $G$ is finitely generated and quasi-isometric to $\Gamma$.
\end{prop}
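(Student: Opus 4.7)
The plan is to apply Theorem~\ref{thm:bow} to the left-multiplication action of $G$ on the Cayley graph $\Delta(\Gamma)$ of $\Gamma$. Fix a finite generating set $A$ of $\Gamma$ and set $X = \Delta(\Gamma,A)$, a proper geodesic space. The subgroup $G$ acts on $X$ by isometries, since adjacency of $x,y \in \Gamma$ in $X$ is governed by the condition $x^{-1}y \in A \cup A^{-1}$, which is preserved under left multiplication.

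I would next verify that this action is properly discontinuous and cocompact. For proper discontinuity, $d_X(x,gx) = d_X(e, x^{-1}gx)$, and the closed $r$-ball around $e$ contains only finitely many elements of $\Gamma$; hence only finitely many $g \in G$ satisfy $d_X(x,gx) \leq r$. For cocompactness, choose coset representatives $t_1,\dots,t_n$ for $G$ in $\Gamma$ and let $R = \max_i d_X(e,t_i)$. Every $x \in \Gamma$ lies in $G t_i$ for some $i$, so $d_X(x, Ge) \leq R$; by Proposition~\ref{ex:bow} the action is then cocompact.

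Before invoking Theorem~\ref{thm:bow} I must establish that $G$ is finitely generated, since the theorem's conclusion ``$G$ is quasi-isometric to $X$'' is interpreted in the paper via the Cayley graph $\Delta(G)$. I would supply this by Schreier's lemma: with $\overline{y}$ denoting the coset representative of $Gy$, the finite set
\[ \{\, t_i a \, \overline{t_i a}^{\,-1} \mid 1 \leq i \leq n,\ a \in A \cup A^{-1} \,\} \]
generates $G$. The verification is a routine telescoping of any word $g = a_1 a_2 \cdots a_\ell \in G$ obtained by inserting $\overline{a_1 \cdots a_k}^{\,-1} \overline{a_1 \cdots a_k}$ between consecutive factors.

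With $G$ now known to be finitely generated and acting p.d.c.\ on the proper geodesic space $\Delta(\Gamma)$, Theorem~\ref{thm:bow} yields that $G$ is quasi-isometric to $\Delta(\Gamma)$, i.e., to $\Gamma$. The main obstacle I anticipate is the bookkeeping around finite generation: Theorem~\ref{thm:bow} as phrased tacitly presupposes that the acting group already has a Cayley graph, so the Schreier step must be slipped in first. Once that is in place, the remainder is a direct and essentially mechanical verification.
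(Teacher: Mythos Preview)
Your argument is correct and is the standard proof via the \v{S}varc--Milnor lemma (Theorem~\ref{thm:bow}): let $G$ act by left multiplication on $\Delta(\Gamma,A)$, check that the action is p.d.c., supply finite generation via Schreier's lemma, and conclude. All steps are sound; the only cosmetic point is that proper discontinuity in the paper is stated for all $x \in X$, not just vertices, but any point of the Cayley graph is within distance $1/2$ of a vertex, so this reduces immediately to the case you treat.

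As for comparison with the paper: the paper does not actually prove Proposition~\ref{prop:bow}. It states, just before Theorem~\ref{thm:bow} and Proposition~\ref{prop:bow}, that ``The proof of the following claims can be found for example in \cite{Bow:06}'' and leaves it at that. So you have supplied a complete argument where the paper only gives a reference; your route is essentially the one Bowditch takes in those notes.
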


\section{Train track complex}\label{sec:ttc}
A {\itshape train track} on $S$ (see \cite{PH:92}) is
an embedded 1-complex $\tau \subset S$ whose vertecis are called {\itshape switches} and 
edges are called {\itshape branches}.
$\tau$ is $C^1$ away from its switches.
At any switch $v$ the incident edges are mutually tangent 
and there is an embedding $f:(0,1) \rightarrow \tau$ with $f(1/2) = v$ which is a $C^1$ map into $S$.
The valence of each switch is at least $3$, 
except possibly for one bivalent switch in a closed curve component. 
Finally, we require that every component $D$ of $S - \tau$ has negative generalized Euler characteristic
in the following sense: 
define $\chi'(D)$ to be the Euler characteristic $\chi(D)$ minus $1/2$ 
for every outward-pointing cusp (internal angle $0$).
For the train track complementary regions all cusps are outward,
so that the condition $\chi'(D) < 0$ excludes annuli, once-punctured disks with smooth boundary,
or non-punctured disks with $0, 1$ or $2$ cusps at the boundary.
We will usually consider isotopic train-tracks to be the same.

A train track is called {\itshape generic} if all switches are at most trivalent. 
A {\itshape train route} is a non-degenerate smooth path in $\tau$; 
in particular it traverses a switch only by passing from incoming to outgoing edge or vice versa. 
The train track $\tau$ is called {\itshape recurrent} if every branch is contained in a closed train route.
The train track $\tau$ is called {\itshape transversely recurrent} if every branch intersects 
transversely with a simple closed curve $c$ so that $S - \tau - c$ 
does not contain an embedded bigon, i.e. a disc with two corners. 
A train track which is both recurrent and transversely recurrent is called {\itshape birecurrent}.

A curve $c$ is {\itshape carried} by a transversely recurrent train track $\tau$
if there is a {\itshape carrying map} $F : S \rightarrow S$ of class $C^1$
which is homotopic to the identity and maps $c$ to $\tau$ in such a way 
that the restriction of its differential $dF$ to every tangent line of $c$ is non-singular.
A train track $\tau'$ is {\itshape carried} by $\tau$
if there is a carrying map $F$ and every train route on $\tau'$ is carried by $\tau$ with $F$.

A generic birecurrent train track is called {\itshape complete} if it is
not a proper subtrack of any birecurrent train track.

\begin{thm}[{\cite{PH:92}}]\label{thm:ph}
\begin{enumerate}
\item If $g > 1$ or $m > 1$, then any birecurrent train track on $S$ is a
subtrack of a complete train track, each of whose complementary region is
either a trigon or a once-punctured monogon.
\item Any birecurrent train track on a once-punctured torus is
a subtrack of a complete train track
whose unique complementary region is a once-punctured bigon.
\end{enumerate}
\end{thm}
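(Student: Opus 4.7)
The plan is to extend any birecurrent track $\tau$ maximally by successively inserting ``diagonals'' inside complementary regions, using the Euler-characteristic identity $\sum_D \chi'(D) = \chi(S)$ to control termination: each insertion splits one region into two strictly simpler ones, so the process stops when every region is of minimal type.

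The core step I would prove is a \emph{diagonal insertion lemma}: if $\tau$ is birecurrent and $D$ is a complementary region that is neither a trigon, nor a once-punctured monogon, nor (on the once-punctured torus) a once-punctured bigon, then there is an embedded arc $\beta \subset D$ with both endpoints tangent to $\partial D$ at cusps such that $\tau \cup \beta$, after smoothing at the two new switches, is again a generic birecurrent train track strictly containing $\tau$. Recurrence passes automatically, since each branch of $\tau$ still lies on its old closed train route and $\beta$ can be spliced into one using the excess cusps of $D$. Transverse recurrence is the delicate point: given a dual simple closed multicurve $c$ certifying transverse recurrence of $\tau$, one chooses $\beta$ transverse to $c$ and performs a local surgery near $\partial D$ to ensure no embedded bigon is introduced in $S \setminus (\tau \cup \beta \cup c)$. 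The lemma then implies that a generic birecurrent track is complete (maximal) precisely when each of its complementary regions is minimal in the stated sense.

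Granting the lemma, part (i) follows by iteration: on a surface with $g > 1$ or $m > 1$ only trigons and once-punctured monogons remain at termination, and the bookkeeping $\chi(S) = -\tfrac12(t+p)$ with $t$ trigons and $p$ once-punctured monogons is consistent with the Euler identity. For part (ii), on the once-punctured torus $\chi(S) = -1$ leaves a priori two terminal possibilities: a trigon together with a once-punctured monogon, or a single once-punctured bigon. I would rule out the first by a direct combinatorial argument around the unique puncture: a generic birecurrent track with only one trigon and one once-punctured monogon on the once-punctured torus would force a switch configuration at the boundary of the monogon that is inconsistent with the trivalent structure dictated by $V-E+F=\chi(S)$ and the equation $2E=3V$, leaving case (b) as the only realizable terminal type.

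The main obstacle is the diagonal insertion lemma, in particular the preservation of transverse recurrence, together with the case-exclusion on the once-punctured torus; the Euler-characteristic accounting itself is routine.
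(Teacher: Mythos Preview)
The paper does not prove this theorem; it is quoted from Penner--Harer \cite{PH:92} and used as a black box. So there is no ``paper's own proof'' to compare against. That said, your outline does follow the standard Penner--Harer strategy (successively inserting branches into non-minimal complementary regions while maintaining birecurrence), and the identification of transverse recurrence as the delicate point in the insertion lemma is accurate.

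There is, however, a genuine gap in your treatment of part~(ii). You propose to exclude the configuration ``one trigon $+$ one once-punctured monogon'' on $S_{1,1}$ by arguing that it is \emph{inconsistent} with $V-E+F=\chi$ and $2E=3V$. It is not. With $V=4$ trivalent switches one gets $E=6$; the trigon contributes three cusps and the monogon one, matching $V=4$; and filling in the puncture gives $V-E+F=4-6+2=0=\chi(T^2)$. All of the counting identities are satisfied, so no contradiction arises from Euler characteristic or valence considerations alone. Such $1$-complexes genuinely embed in $S_{1,1}$.

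What actually fails is \emph{recurrence}: the configuration with a once-punctured monogon on $S_{1,1}$ does not support a strictly positive transverse measure (equivalently, the small branch at the monogon cusp cannot lie on a closed train route). This is the content of the relevant argument in \cite{PH:92}, and it is what forces the maximal birecurrent track on $S_{1,1}$ to stop at the once-punctured bigon rather than proceeding to trigon $+$ monogon. Your write-up should replace the Euler-characteristic ``contradiction'' with an analysis of the switch equations (or of closed train routes) showing non-recurrence of the monogon branch in this specific topology.
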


It follows:

\begin{cor}\label{cor:switch}
Suppose $\tau$ is a complete train track on $S$ of genus $g$ with $m$ punctures.
Then the number of switches of $\tau$ depends only on the topological type of $S$.
\end{cor}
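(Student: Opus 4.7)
The plan is to pin down the number $V$ of switches of $\tau$ by Euler-characteristic bookkeeping. Since a complete train track is generic by definition, every switch is trivalent, so writing $E$ for the number of branches we have $2E = 3V$ and hence $\chi(\tau) = V - E = -V/2$. At each trivalent switch the three branches are mutually tangent, with two on one side of the tangent line and one on the other; the pair on the ``large'' side cuts out exactly one cusp of a complementary region, so the total number of cusps summed over all complementary regions equals $V$.

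Next I would invoke Theorem \ref{thm:ph} to classify the complementary regions. If $g > 1$ or $m > 1$, each complementary region is a trigon or a once-punctured monogon, and the number of once-punctured monogons equals $m$ since each puncture lies in exactly one of them. If $S$ is a once-punctured torus, Theorem \ref{thm:ph}(ii) gives a unique complementary region, which is a once-punctured bigon. The core identity is additivity of Euler characteristic for $S = \tau \sqcup (S \setminus \tau)$, namely $\chi(S) = \chi(\tau) + \sum_D \chi(D)$ with $D$ ranging over complementary regions. Reorganizing via the cusp count above and the definition of $\chi'$ from Section \ref{sec:ttc} produces the cleaner identity $\sum_D \chi'(D) = \chi(S)$.

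In each case of the classification, $\chi'(D)$ takes only one value ($-1/2$ for a trigon or once-punctured monogon, $-1$ for the once-punctured bigon), so this single equation determines the total number of complementary regions. Combining with the per-region cusp counts and the relation $M = m$ then pins down $V$ explicitly in terms of $g$ and $m$. The only real subtleties are verifying that each trivalent switch contributes exactly one cusp and handling the once-punctured torus as a separate case because its complementary region is of a different type; once these are settled, $V$ is manifestly a function of the topological type of $S$ alone, which is the claim.
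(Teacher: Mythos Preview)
Your argument is correct and follows essentially the same route as the paper: both use genericity to get $2E=3V$, invoke Theorem~\ref{thm:ph} to classify the complementary regions, equate the total cusp count with the number of switches, and close with an Euler-characteristic relation to solve for $V$. Your packaging via the identity $\sum_D \chi'(D)=\chi(S)$ is a clean reformulation of the paper's direct face--edge--vertex count $n_t-n_b+n_s=2-2g-m$, but the underlying computation is the same and yields the same answer $V=4(3g-3+m)$ (and $V=2$ for the once-punctured torus).
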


\begin{proof}
If $S$ is the once-punctured torus, then $\tau$ have 2 vertices.

In the other case,
let $n_t$ be the number of triangle component of $S - \tau$,
$n_s$ be the number of switches of $\tau$ and
$n_b$ be the number of branches of $\tau$.
Since $\tau$ is generic, $2n_b = 3n_s$. 
By Theorem \ref{thm:ph}, $n_s = 3n_t + m$.
By Euler characteristic, $n_t-n_b+n_s=2-2g-m$.
Now we get $n_s = 4(3g-3+m)$. 
\end{proof}

A half-branch $\tilde{b}$ in a generic train track $\tau$ incident on a switch $v$ is called {\itshape large}
if the switch $v$ is trivalent and if every arc $\rho : (−\varepsilon, \varepsilon) \rightarrow \tau$ 
of class $C^1$ which passes through $v$ meets the interior of $\tilde{b}$.
A branch $b$ in $\tau$ is called {\itshape large} if each of its two half-branches is large;
in this case $b$ is necessarily incident to two distinct switches.

There is a simple way to modify a complete train track $\tau$ to another complete train track.
Namely, if $e$ is a large branch of $\tau$ then we can perform a right or left {\itshape split} of $\tau$ 
at $e$ as shown in Figure \ref{fig:split}.
A complete train track $\tau$ can always be at least one of the left or right split 
at any large branch $e$ to a complete train track $\tau'$(see \cite{Ham:05}).
We note that $\tau'$ is carried by $\tau$.
\begin{figure}[htbp]
  \begin{center}
    \includegraphics[keepaspectratio=true,width=110mm]{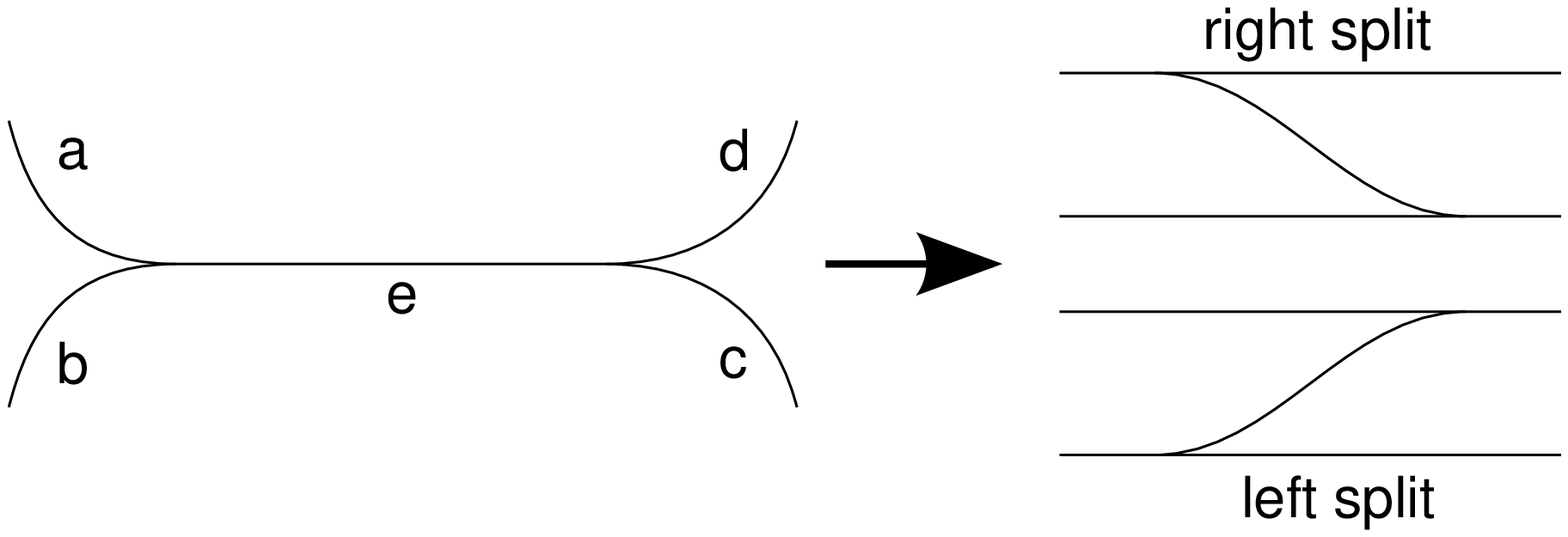}
  \end{center}
  \caption{a split}
  \label{fig:split}
\end{figure}

\begin{definition}[train track complex]
A train track complex $\mathcal{TT}(S)$ is defined as follow: 
The set of vertices of $\mathcal{TT}(S)$ consists of all isotopy classes of complete train tracks on $S$. 
Two Complete train tracks $\tau, {\tau}'$ is connected with an edge
if $\tau'$ can be obtained from $\tau$ by a single split.
\end{definition}

For each switch $v$ of $\tau$, fix a direction of the tangent line to $\tau$ at $v$.
The branch $b$ which is incident to $v$ is called {\itshape incoming}
if the direction at $v$ coincides with the direction from $b$ to $v$, and {\itshape outgoing} if not.
A {\itshape transverse measure} on $\tau$ is a non-negative function $\mu$ on the set of branches 
satisfying the switch condition : For any switch of $\tau$ the sums of $\mu$
over incoming and outgoing branches are equal.
A train track $\tau$ is recurrent if and only if
it supports a transverse measure which is positive on every branch (see \cite{PH:92}). 

For a recurrent train track $\tau$ the set $P(\tau)$ of all transverse measures on $\tau$
is a convex cone in a linear space. 
A {\itshape vertex cycle} (see \cite{MM:99}) on $\tau$ is a transverse measure $\mu$ 
which spans an extremal ray in $P(\tau)$.
Up to scaling, a vertex cycle $\mu$ is a {\itshape counting measure}
of a simple closed curve $c$ which is carried by $\tau$.
This means that for a carrying map $F:c \rightarrow \tau$ and
every open branch $b$ of $\tau$ the $\mu$-weight of $\tau$
equals the number of connected components of $F^{-1}(b)$.
We also use the notion, a vertex cycle, for the simple closed curve $c$.

\begin{prop}[\cite{Ham:05}]\label{prop:ham}
Let $\tau$ be a complete train track.
Suppose $c$ is a vertex cycle on $\tau$ with a carrying map $F$.
Then, $F(c)$ passes through every branch of $\tau$ at most twice,
and with different orientation if any.
\end{prop}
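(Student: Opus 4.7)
The plan is to argue by contradiction, exploiting the fact that a vertex cycle $\mu_c$ spans an extremal ray in $P(\tau)$: if I can decompose $\mu_c$ as a sum of two non-proportional elements of $P(\tau)$, then extremality fails and $\mu_c$ cannot be a vertex cycle. So I would assume that $F(c)$ traverses some branch $b$ either at least three times, or exactly twice in the same orientation, and produce such a decomposition by a cut-and-paste surgery on $c$.

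Under this assumption, a pigeonhole argument gives two components $\alpha_1,\alpha_2$ of $F^{-1}(b)\subseteq c$ whose images cross $b$ with matching orientation. I would cut $c$ at an interior point $p_i$ of each $\alpha_i$ and splice the four resulting strands in the complementary pairing. Because the two crossings of $b$ agree in orientation, the splice can be arranged to be smooth at $\tau$ and to respect the switch condition at every switch, so the new multicurve $c'$ is still carried by $\tau$ via a carrying map inherited from $F$. Swapping two parallel strands of a single simple closed curve produces exactly two disjoint simple closed curves, $c'=c_1\sqcup c_2$, and since preimage counts at every branch of $\tau$ are unchanged by the swap, one has the identity $\mu_{c_1}+\mu_{c_2}=\mu_c$ in $P(\tau)$.

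What remains is to show that $\mu_{c_1}$ and $\mu_{c_2}$ are not both scalar multiples of $\mu_c$. The two arcs $A_1,A_2$ of $c\setminus\{p_1,p_2\}$ are distributed among $c_1$ and $c_2$ differently from their original arrangement in $c$, and by choosing $b$ and the $\alpha_i$ with some care (using that $\tau$ is birecurrent and that $c$ is essential) one can arrange that some branch $b'$ traversed by $c$ lies on only one of $A_1,A_2$. Then one of the measures $\mu_{c_i}$ has support strictly smaller than $\mathrm{supp}(\mu_c)$ and therefore cannot be a positive multiple of $\mu_c$, giving the desired nontrivial convex decomposition and contradicting extremality. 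The main obstacle I anticipate is precisely this last non-proportionality check: the other ingredients (pigeonhole, smoothness and switch-compatibility of the splice, invariance of the counting measure) are local and routine, but ruling out the degenerate situation in which both new measures happen to be proportional to $\mu_c$ requires a genuine use of the combinatorics of how $c$ sits on $\tau$, and this is where the completeness assumption on $\tau$ should come into play.
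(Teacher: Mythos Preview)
The paper does not supply its own proof of this proposition: it is quoted from Hamenst\"adt \cite{Ham:05} and used as a black box. So there is no ``paper's proof'' to compare against; I can only comment on the soundness of your sketch relative to the standard argument.

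Your overall strategy---surger $c$ at two like-oriented crossings of a branch $b$ to produce carried simple closed curves $c_1,c_2$ with $\mu_{c_1}+\mu_{c_2}=\mu_c$, then contradict extremality---is exactly the standard one, and the local parts (pigeonhole, smoothness of the splice, preservation of the counting measure, simplicity and essentiality of $c_1,c_2$ via the complementary-region condition) are fine.

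The step you flag as the obstacle, non-proportionality, is indeed the crux, but your proposed resolution is both harder than necessary and misdiagnoses the role of completeness. Trying to locate a branch $b'$ met by only one of the two arcs is not obviously possible in general, and completeness of $\tau$ is \emph{not} what saves you---the statement holds for any train track satisfying the usual geometric condition on complementary regions. The clean way through is to use that the cone $P(\tau)$ embeds in the space of measured laminations on $S$. If $\mu_{c_1}=\lambda\,\mu_c$ with $\lambda>0$, then as measured laminations $(c_1,1)=(c,\lambda)$, which forces $c_1$ isotopic to $c$ and $\lambda=1$; but then $\mu_{c_2}=0$, impossible since $c_2$ is a nonempty essential carried curve. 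Thus neither $\mu_{c_i}$ lies on the ray through $\mu_c$, and extremality fails. Replacing your last paragraph with this observation closes the gap without any appeal to completeness.
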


Proposition \ref{prop:ham} and Corollary \ref{cor:switch} imply that 
the number of vertex cycles on a complete train track on $S$ 
is bounded by a universal constant (see \cite{MM:99}).
Moreover, there is a number $D > 0$ with the property that
for every complete train track $\tau$ on $S$
the distance in $\mathcal{C}(S)$ between any two vertex cycles on $\tau$
is at most $D$ (see \cite{Ham:05}, \cite{MM:04}).

\section{Farey graph}\label{sec:farey}
Let S be the once-punctured torus or the 4-punctured sphere. 
The essential simple closed curves on S are well known to be in 
one-to-one correspondence with rational numbers $p/q$ with $1/0=\infty$ 
Thus the $0$-skeleton of $\mathcal{C}(S)$ is identified with $\hat{\mathbb{Q}} := \mathbb{Q} \cup \{ \infty \}$ 
in the circle $S^1=\mathbb{R} \cup \infty$. 

There are numerous ways to build a Farey graph $\mathcal{F}$, 
any of them produces an isomorphic graph. 
One can start with the rational projective line $\hat{\mathbb{Q}}$, 
identifying $0$ with $0/1$ and $\infty$ with $1/0$, 
and take this to be the vertex set of $\mathcal{F}$. 
Then, two projective rational numbers $p/q, r/s \in \hat{\mathbb{Q}}$, 
where $p$ and $q$ are coprime and $r$ and $s$ are coprime, 
are deemed to span an edge, or 1-simplex, if and only if $| ps - rq | = 1$. 
The result is a connected graph in which every edge separates. 
The graph $\mathcal{F}$ can be represented on a disc; 
see Figure \ref{fig:farey}. 
\begin{figure}[htb]
  \begin{center}
    \includegraphics[keepaspectratio=true,width=81mm]{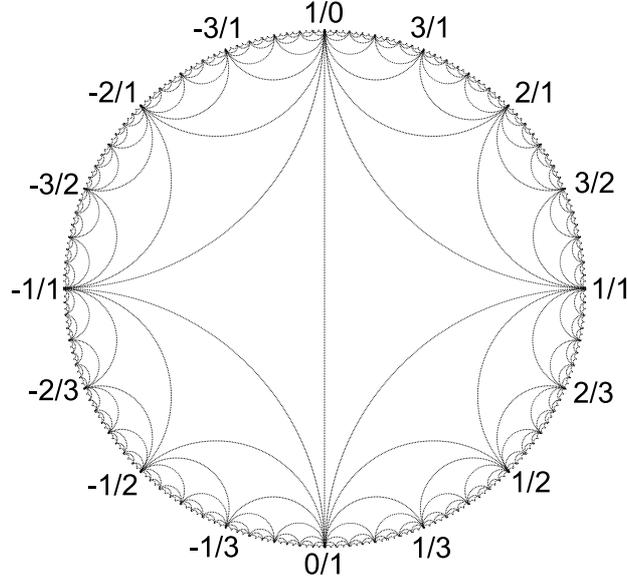}
  \end{center}
  \caption{the Farey graph}
  \label{fig:farey}
\end{figure}
We shall say a graph is a Farey graph if it is isomorphic to $\mathcal{F}$. 

Note that the curve complexes of a once-punctured torus and 4-punctured sphere 
are Farey graphs (see \cite{Min:96}\cite{Ken:06}). 

\begin{rem}
The Farey graph $\mathcal{F}$ is quasi-isometric to the dual graph.
\end{rem}

\section{Train tracks on the once-punctured torus}\label{sec:torus}
Let $S_{1,1}$ be the once-punctured torus and $\tau$ a complete train track on $S_{1,1}$. 
By Theorem \ref{thm:ph} and Corollary \ref{cor:switch},
$\tau$ has the unique complementary region that is a once-punctured bigon and 
the number of switches of $\tau$ equals 2. 
It follows that every complete train track on $S_{1,1}$ is orientation preserving $C^1$--diffeomorphic to 
the one illustrated in Figure \ref{fig:11-tt-1}. 

\begin{figure}[htbp]
  \begin{center}
    \includegraphics[keepaspectratio=true,height=40mm]{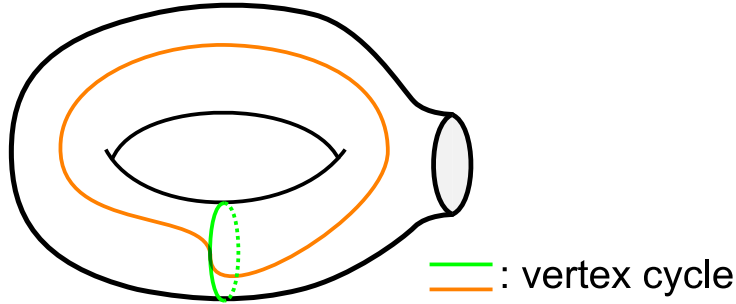}
  \end{center}
  \caption{complete train track on Once-punctured torus}
  \label{fig:11-tt-1}
\end{figure}

$\tau$ has exactly two vertex cycles $c_1, c_2$ whose
{\itshape intersection number} $i(c_1, c_2)$ equals $1$. 
Thus, $c_1$ and $c_2$ is connected by an edge in Farey graph.
Conversely,
if we fix simple closed curve $c_1, c_2$ on $S_{1,1}$ whose intersection number $i(c_1, c_2)$ equals $1$,
then there is only two complete train tracks whose vertex cycles are $c_1$ and $c_2$. 

Write $V(G)$ for the vertex set of a graph $G$ and $E(G)$ for the set of all edges in G.
We define a map $\varphi : V(\mathcal{TT}(S_{1,1})) \rightarrow E(\mathcal{F})$ as follow:
Let $\tau \in S_{1,1}$. Suppose $c_1, c_2$ are vertex cycles on $\tau$.
We define $\varphi(\tau)$ as the edge of $\mathcal{F}$ which connects $c_1$ and $c_2$. 

We construct the graph $G_1$ of $\varphi$ as follows: 
Let $V(G_1) = E(\mathcal{F})$. 
We connect vertices $e_1, e_2 \in E(\mathcal{F})$ if 
some $\tau_1 \in \varphi^{-1}(e_1)$ and some $\tau_2 \in \varphi^{-1}(e_2)$ 
are connected by an edge in $\mathcal{TT}(S_{1,1})$. 
$\varphi$ can be naturally extended to $\varphi : \mathcal{TT}(S_{1,1}) \rightarrow G_1$.

\begin{lemma}\label{lemma:QuasiG1}
$G_1$ is quasi-isometric to the dual graph of the Farey graph. 
\end{lemma}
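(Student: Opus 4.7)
The plan is to identify $G_1$ explicitly, up to quasi-isometry, with the line graph of the dual tree of $\mathcal{F}$, and then invoke the standard fact that the line graph of a bounded-valence graph is quasi-isometric to that graph. The first step is to describe the edges of $G_1$ combinatorially by tracing what a single split of a complete train track on $S_{1,1}$ does to its pair of vertex cycles. By Corollary \ref{cor:switch} and the preceding discussion, every complete train track on $S_{1,1}$ is diffeomorphic to the model in Figure \ref{fig:11-tt-1}, so there is essentially a single combinatorial picture to inspect. The claim I would verify is: if $\tau$ has vertex cycles $\{c_1, c_2\}$ (with $i(c_1, c_2)=1$) and $\tau'$ is obtained from $\tau$ by one split, then the vertex cycles of $\tau'$ are $\{c_i, c_m\}$ for some $i \in \{1,2\}$, where $c_m$ is the Farey mediant of $c_1, c_2$, i.e.\ the unique simple closed curve such that $\{c_1, c_2, c_m\}$ bounds a triangle of the Farey tessellation. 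Combined with the fact established just before the lemma that every pair of Farey neighbors arises as the vertex cycles of exactly two complete train tracks, this characterizes adjacency in $G_1$: two vertices $e_1, e_2 \in E(\mathcal{F}) = V(G_1)$ are joined by an edge in $G_1$ if and only if $e_1, e_2$ are two sides of a common Farey triangle.

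Let $\mathcal{D}$ denote the dual graph of $\mathcal{F}$, whose vertices are the triangles of the Farey tessellation and whose edges connect triangles sharing a common side. Because $\mathcal{F}$ triangulates the disc, $\mathcal{D}$ is a $3$-regular tree. There is a natural bijection $E(\mathcal{F}) \leftrightarrow E(\mathcal{D})$ that sends each $e \in E(\mathcal{F})$ to the dual edge joining the two triangles adjacent to $e$; under this bijection, the adjacency on $V(G_1)$ described in the previous paragraph is exactly the relation of edges in $\mathcal{D}$ sharing a common endpoint. Hence $G_1$ is canonically isomorphic to the line graph $L(\mathcal{D})$.

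To close, I would invoke (or quickly reprove) the general fact that for a bounded-valence graph $H$, the line graph $L(H)$ is quasi-isometric to $H$ via the map that sends each edge of $H$ to one of its two endpoints: this map is $1$-Lipschitz up to an additive constant, the reverse inequality follows because consecutive edges along any path in $L(H)$ share a vertex in $H$, and the image is $1$-dense in $H$. Applied to $H = \mathcal{D}$, this yields the quasi-isometry between $G_1$ and the dual graph of $\mathcal{F}$ demanded by the lemma. The main obstacle is the first step: a careful combinatorial verification of how a split acts on the vertex-cycle pair for the model train track on $S_{1,1}$. Once this is settled, the identification of $G_1$ with $L(\mathcal{D})$ and the line-graph quasi-isometry are both routine bookkeeping.
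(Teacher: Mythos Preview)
Your proposal is correct and follows essentially the same route as the paper: both analyze the effect of a single split on the pair of vertex cycles, identify $G_1$ with the line graph of the dual tree $\mathcal{D}$ of $\mathcal{F}$, and then pass to the quasi-isometry $L(\mathcal{D})\simeq\mathcal{D}$. The only cosmetic slip is your phrase ``the unique simple closed curve $c_m$ such that $\{c_1,c_2,c_m\}$ bounds a triangle'' --- there are of course two such $c_m$, one for each side of the edge, and (as the paper makes explicit) the two complete train tracks in $\varphi^{-1}(e)$ account for the two adjacent triangles; your later sentence shows you already have this in mind.
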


\begin{proof}
Suppose $\tau_1$ and $\tau_2$ are different complete train tracks on $S_{1,1}$
which have common vertex cycles $c_1, c_2$.
$\tau_1$ have a unique large edge $b$ and 
there are two complete train tracks $\tau'_1, \tau''_1$ which are obtained
by a left or right split of $\tau_1$ at $b$ \ (see Figure \ref{fig:11-split}).
\begin{figure}[tb]
  \begin{center}
    \includegraphics[keepaspectratio=true,width=100mm]{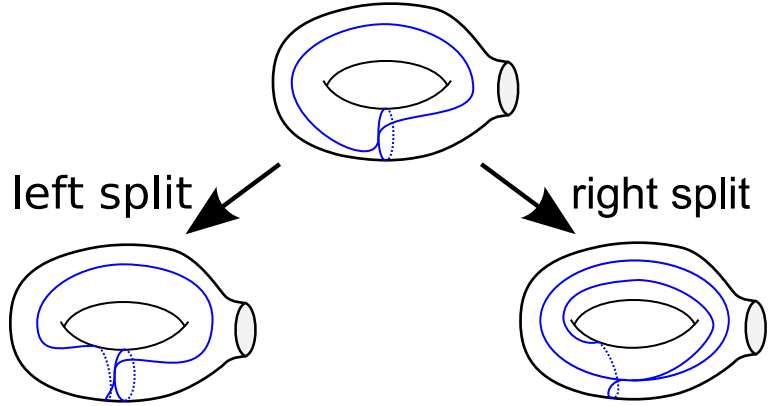}
  \end{center}
  \caption{Splits of a complete train track on $S_{1,1}$}
  \label{fig:11-split}
\end{figure}
We can see that one vertex cycle on $\tau'_1$ is the same as $c_1$ on $\tau_1$, 
and the other vertex cycle $c_3$ on $\tau'_1$ intersects $c_2$ on $\tau_1$ at one point.
Thus $\varphi(\tau'_1)$ and $\varphi(\tau_1)$ are adjacent edges in $\mathcal{F}$. 
In this case, vertex cycles on $\tau''_1$ are $c_2$ and $c_3$.
Hence $\varphi(\tau_1)$, $\varphi(\tau'_1)$ and $\varphi(\tau''_1)$ span 
a triangle on $\mathcal{F}$.
Similarly, we can get complete train tracks $\tau'_2, \tau''_2$ by a split of $\tau_2$,
 and $\varphi(\tau_2)$, $\varphi(\tau'_2)$ and $\varphi(\tau''_2)$ 
span another triangle on $\mathcal{F}$. (see Figure \ref{fig:11-split-f})

The mapping class group $\mathcal{M}(S_{1,1})$ acts isometrically on $\mathcal{TT}(S_{1,1})$ 
and $\mathcal{F}$ and acts transitively on $V(G_1)=E(\mathcal{F})$.
It follows that every edge in $G_1$ connects an adjacent edge of $\mathcal{F}$ and 
every adjacent edge of $\mathcal{F}$ is connected with a direct edge in $G_1$.
Thus $G_1$ is the line graph of the dual of $\mathcal{F}$, 
i.e. vertices of $G_1$ represent edges of the dual of $\mathcal{F}$ and 
two vertices are adjacent iff their corresponding edges share a common endpoint (see Figure \ref{fig:g1}).
It's now obvious that $G_1$ is quasi-isometric to the dual of the Farey graph.
\end{proof}

\begin{figure}[htb]
   \begin{minipage}{0.49\hsize}
      \begin{center}
        \includegraphics[keepaspectratio=true,width=65mm]{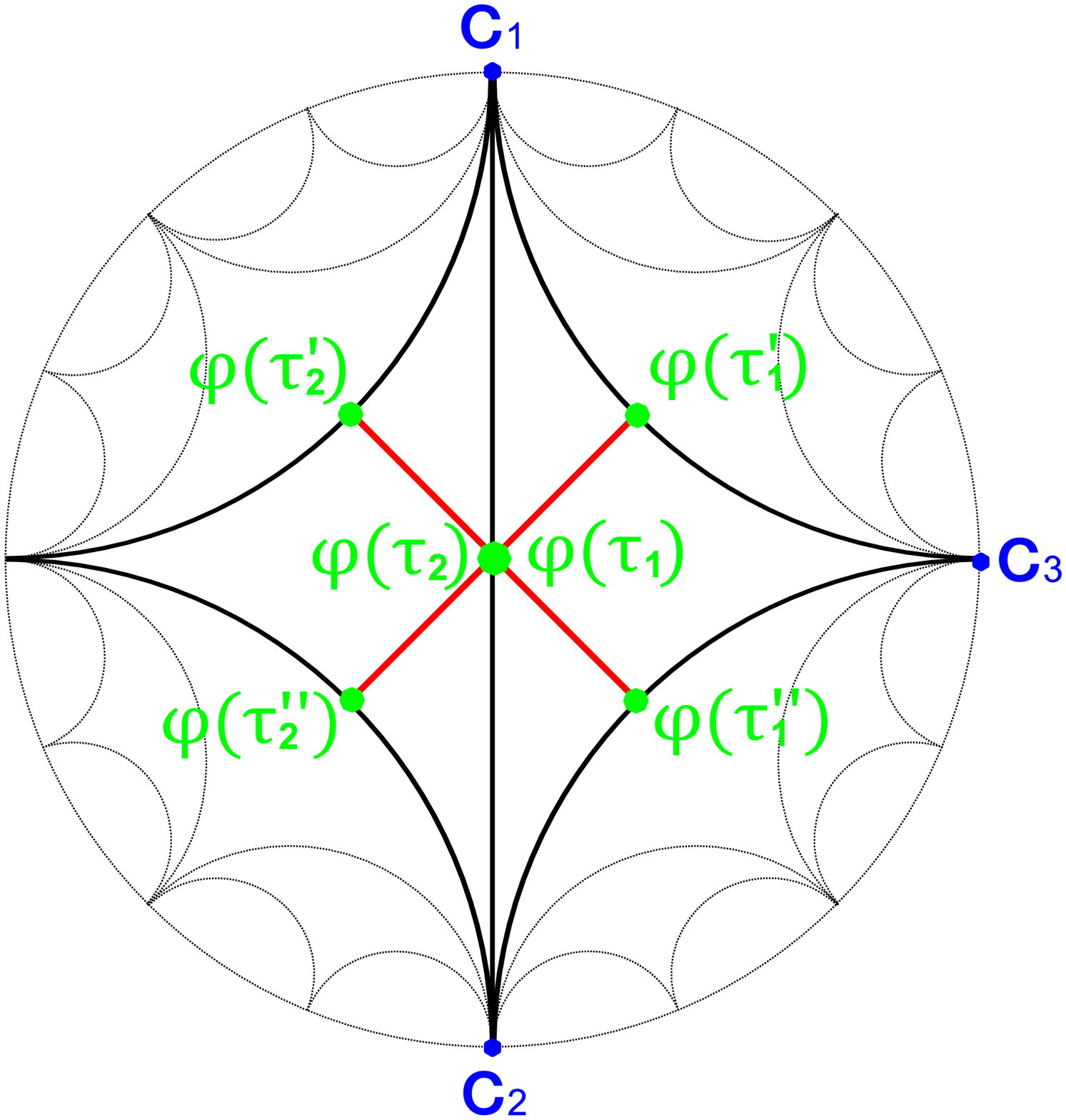}
        \caption{}
        \label{fig:11-split-f} 
     \end{center}
   \end{minipage}
   \begin{minipage}{0.49\hsize}
      \begin{center}
        \includegraphics[keepaspectratio=true,width=65mm]{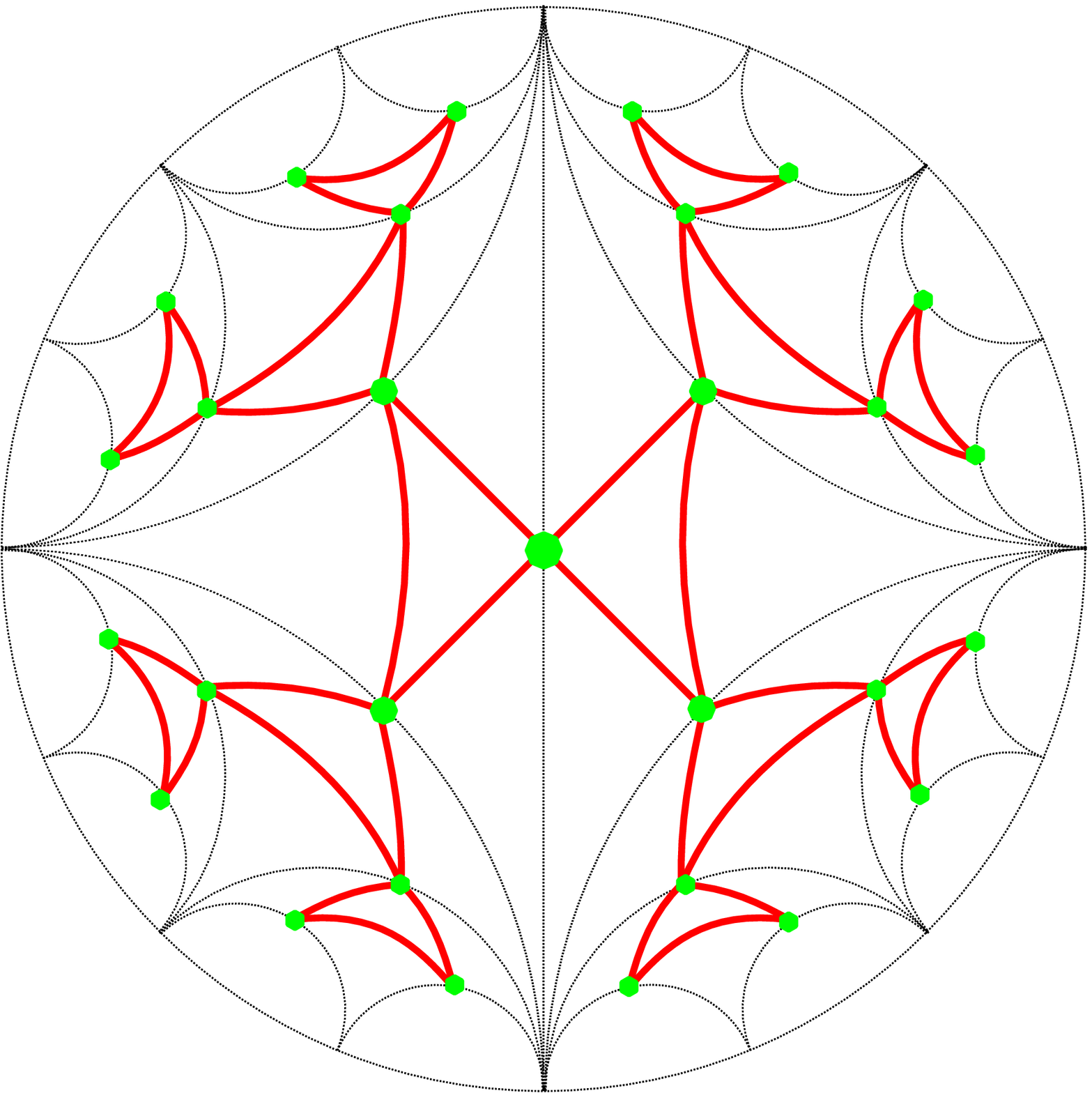}
        \caption{$G_1$}
        \label{fig:g1} 
     \end{center}
   \end{minipage}
\end{figure}

\begin{lemma}\label{lemma:connect}
$\mathcal{TT}(S_{1,1})$ is connected.
\end{lemma}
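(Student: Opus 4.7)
The natural strategy is to leverage the connectedness of the Farey graph $\mathcal{F}$ via the map $\varphi : \mathcal{TT}(S_{1,1}) \to E(\mathcal{F})$ constructed above, lifting combinatorial paths from $\mathcal{F}$ to sequences of splits in $\mathcal{TT}(S_{1,1})$. I would fix a reference train track $\tau_0$ and show that an arbitrary $\tau$ is joined to $\tau_0$ by a sequence of splits, inducting on the distance in $G_1$ between $\varphi(\tau)$ and $\varphi(\tau_0)$. Since $\mathcal{F}$ is connected (and the dual/line-graph description from Lemma \ref{lemma:QuasiG1} tells us that adjacency in $G_1$ corresponds to two edges of $\mathcal{F}$ lying in a common Farey triangle), this distance is finite.

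The inductive step uses the explicit split picture from the proof of Lemma \ref{lemma:QuasiG1}. If $\sigma$ is a complete train track with vertex cycles $\{c_1,c_2\}$ and unique large branch $b$, the left and right splits at $b$ produce complete train tracks $\sigma',\sigma''$ with $\varphi(\sigma')=\{c_1,c_3\}$ and $\varphi(\sigma'')=\{c_2,c_3\}$, where $c_1,c_2,c_3$ span one of the two Farey triangles on $\{c_1,c_2\}$. Given a path in $\mathcal{F}$ from $\varphi(\tau)$ to $\varphi(\tau_0)$ whose consecutive edges share a common Farey triangle, I lift it one step at a time by choosing the left or right split that moves $\varphi$ one step closer to $\varphi(\tau_0)$.

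The main obstacle is the $2$-to-$1$ nature of $\varphi$: every edge $e\in E(\mathcal{F})$ has exactly two preimages in $\mathcal{TT}(S_{1,1})$, and from a single preimage $\sigma\in\varphi^{-1}(e)$, splits of $\sigma$ only reach preimages of edges in \emph{one} of the two Farey triangles on $e$; the other triangle is accessed only from the other preimage $\bar\sigma$. To remove this obstruction, I would prove as an auxiliary claim that any two elements of $\varphi^{-1}(e)$ are themselves connected in $\mathcal{TT}(S_{1,1})$. The concrete approach is to trace a triangle-loop in $\mathcal{F}$ around a single vertex, say $c_1$: starting from $\sigma$, perform successive splits producing train tracks whose $\varphi$-images circle once around $c_1$ through adjacent Farey triangles; a finite such loop returns $\varphi$ to $e$, and tracking the left/right split data shows the endpoint must be $\bar\sigma$ rather than $\sigma$. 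With this auxiliary fact, the induction closes: at each step of the lift, if the required preimage of the next edge is not directly reachable from the current train track, first connect it to its $\varphi$-partner and then split.
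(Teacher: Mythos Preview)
Your overall reduction is exactly the paper's: use connectedness of $G_1$ (equivalently of $\mathcal{F}$) and reduce to showing that the two elements of each fiber $\varphi^{-1}(e)$ are joined in $\mathcal{TT}(S_{1,1})$. The issue is your proposed proof of that auxiliary claim.

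You propose to ``circle once around $c_1$'' through adjacent Farey triangles by performing successive splits, expecting a finite loop that returns $\varphi$ to $e$. This does not terminate: every vertex of $\mathcal{F}$ has infinite valence, so the sequence of edges through $c_1$ obtained by repeatedly splitting (always keeping $c_1$ as a vertex cycle) is a one-sided infinite ray $\{c_1,c_2\},\{c_1,c_3\},\{c_1,c_4\},\dots$ and never comes back to $\{c_1,c_2\}$. Moreover, ``successive splits'' alone cannot close up: splitting strictly refines the set of carried curves, so a pure splitting sequence cannot return to a train track carrying the original one. Any closed walk in $\mathcal{TT}(S_{1,1})$ must mix splits with inverse splits (folds).

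The paper resolves the auxiliary claim by circling a single Farey \emph{triangle}, not a vertex, and explicitly exhibits a path of length $3$: from $\tau$ split once to $\tau_1$; then there is a $\tau_2$ with $\varphi(\tau_2)$ the third side of the triangle on $\varphi(\tau)$ and $\varphi(\tau_1)$, such that $\tau_2$ splits (one direction) to $\tau_1$ and (the other direction) to the second preimage $\tau'$ of $e$. Thus $\tau\to\tau_1\to\tau_2\to\tau'$ is split, fold, split, and $d(\tau,\tau')=3$. Replacing your vertex-loop by this triangle-loop (and allowing the middle edge to be a fold) closes the gap; the rest of your argument then goes through and coincides with the paper's.
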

\begin{proof}
Let $e \in E(\mathcal{F})$ and $\varphi^{-1}(e):= \{\tau, \tau' \}$.
All we need is to show that $\tau$ and $\tau'$ are connected in $\mathcal{TT}(S_{1,1})$, 
since $G_1$ is connected.
$\tau$ can be a right(left) split to a complete train track $\tau_1$. 
Then, there is a complete train track $\tau_2$ which 
can be a right(left) split to $\tau'$ and 
can be a left(right) split to $\tau_1$ (see Figure \ref{fig:11-split2}).
Thus $\tau$ and $\tau'$ are connected and $d(\tau, \tau') = 3$.
\begin{figure}[htb]
  \begin{center}
     \includegraphics[keepaspectratio=true,width=90mm]{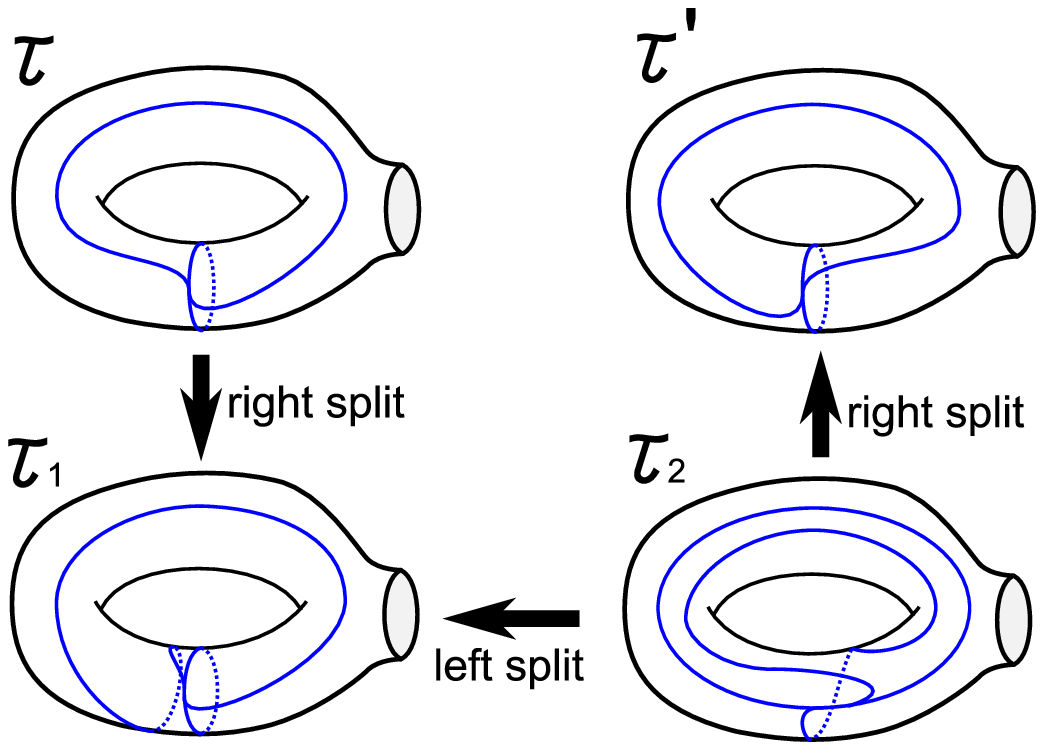}
  \end{center}
  \caption{}
  \label{fig:11-split2}
\end{figure}
\end{proof}

\begin{lemma}\label{lemma:QuasiPhi}
$\varphi$ is a quasi-isometory.
\end{lemma}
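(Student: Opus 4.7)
The plan is to exploit the fact, established in Lemma \ref{lemma:connect}, that $\varphi$ has fibers of cardinality exactly two with fiber-diameter exactly three in $\mathcal{TT}(S_{1,1})$. Combined with the construction of $G_1$, this gives both a Lipschitz upper bound and a coarse lower bound for $\varphi$.

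First, I would verify that $\varphi$ is surjective, so that the image is automatically cobounded in $G_1$: given any edge $e \in E(\mathcal{F})$ joining curves $c_1, c_2$ with $i(c_1,c_2)=1$, the earlier discussion exhibits (exactly two) complete train tracks with $c_1, c_2$ as their vertex cycles. Next I would establish the upper Lipschitz bound. Suppose $\tau, \tau'$ are joined by a single edge in $\mathcal{TT}(S_{1,1})$, obtained by a split at the large branch of $\tau$. The analysis in the proof of Lemma \ref{lemma:QuasiG1} shows that this split replaces exactly one of the two vertex cycles of $\tau$, so $\varphi(\tau)$ and $\varphi(\tau')$ are adjacent edges in $\mathcal{F}$, and hence, by the definition of $G_1$, satisfy $d_{G_1}(\varphi(\tau),\varphi(\tau')) \leq 1$. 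Induction along a geodesic gives
\[ d_{G_1}(\varphi(\tau),\varphi(\tau')) \leq d_{\mathcal{TT}}(\tau,\tau'). \]

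For the coarse lower bound, I would lift paths in $G_1$ to paths in $\mathcal{TT}(S_{1,1})$ using Lemma \ref{lemma:connect}. Given a geodesic $e_0, e_1, \ldots, e_n$ in $G_1$ with $e_0 = \varphi(\tau)$ and $e_n = \varphi(\tau')$, each consecutive pair $(e_{i}, e_{i+1})$ is, by the definition of $G_1$, realized by some edge in $\mathcal{TT}(S_{1,1})$ between chosen representatives $\sigma_i \in \varphi^{-1}(e_i)$ and $\sigma'_{i+1} \in \varphi^{-1}(e_{i+1})$. Since every fiber $\varphi^{-1}(e_i)$ has diameter at most three in $\mathcal{TT}(S_{1,1})$, we can bridge $\sigma'_i$ to $\sigma_i$ inside the fiber at cost at most three. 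Concatenating these bridges with the realizing edges and with initial/final fiber-bridges from $\tau$ and $\tau'$ yields a path from $\tau$ to $\tau'$ of length at most $4n + 6$. This gives
\[ d_{\mathcal{TT}}(\tau,\tau') \leq 4\, d_{G_1}(\varphi(\tau),\varphi(\tau')) + 6, \]
which rearranges to the desired lower bound with $k_1 = 1/4$ and $k_2 = 3/2$.

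Combining the two inequalities together with surjectivity completes the proof that $\varphi$ is a quasi-isometry. The step I would expect to require the most care is the lower bound, since one must keep track of the ambiguity in choosing the lifts $\sigma_i, \sigma'_i$ inside each fiber; the content of Lemma \ref{lemma:connect} is exactly what makes these fiber-corrections uniformly bounded, so once that lemma is in hand the rest is essentially a bookkeeping argument along a geodesic in $G_1$.
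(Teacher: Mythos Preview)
Your proposal is correct and follows essentially the same approach as the paper: both lift a geodesic in $G_1$ back to $\mathcal{TT}(S_{1,1})$ using the uniform bound $\mathrm{diam}(\varphi^{-1}(e))=3$ from Lemma~\ref{lemma:connect} to obtain $d_{\mathcal{TT}}(\tau,\tau') \leq 4\,d_{G_1}(\varphi(\tau),\varphi(\tau'))+C$. You are in fact more explicit than the paper about surjectivity and the $1$-Lipschitz upper bound (which the paper leaves implicit in the definition of $G_1$); your additive constant $+6$ can be trimmed to the paper's $+3$ by noting that the initial and final fiber-bridges are already subsumed in the count of $n+1$ fibers along the path, but this is immaterial for the quasi-isometry conclusion.
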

\begin{proof}
Let $\tau, \tau' \in V(\mathcal{TT}(S_{1,1}))$. 
Suppose $\alpha$ is geodesic on $G_1$ from $\varphi(\tau)$ to $\varphi(\tau')$.
$\tau, \tau' \in \varphi^{-1}(\alpha)$ and
$diam(\varphi^{-1}(\alpha)) \leq 4d(\varphi(\tau), \varphi(\tau'))+3$ 
since $diam(\varphi^{-1}(e))=3$ for any $e \in E(\mathcal{F})$.
Thus $d(\tau, \tau') \leq 4d(\varphi(\tau), \varphi(\tau'))+3$.

It follows that $\varphi$ is a quasi-isometry.
\end{proof}

\begin{proof}[Proof of Theorem \ref{thm:main}(a once-puncture torus case)]
By Lemma \ref{lemma:QuasiG1} and \ref{lemma:QuasiPhi},
$\mathcal{TT}(S_{1,1})$ is quasi-isometric to the dual graph of the Farey graph.
\end{proof}

$\mathcal{TT}(S_{1,1})$ is obtained by extending one vertex of $G_1$ to two vertices.
When we think the action of the mapping class group,
we see that $\mathcal{TT}(S_{1,1})$ is isomorphic to the graph as in Figure \ref{fig:g2}.
We can notice that this graph is isomorphic to the Caley graph of
$PSL(2,\mathbb{Z}) = \langle r,l \mid (lr^{-1}l)^2=1, (lr^{-1})^3=1 \rangle$,
and thus Theorem \ref{prop:torus} is proved.

\begin{figure}[htb]
  \begin{center}
    \includegraphics[keepaspectratio=true,width=80mm]{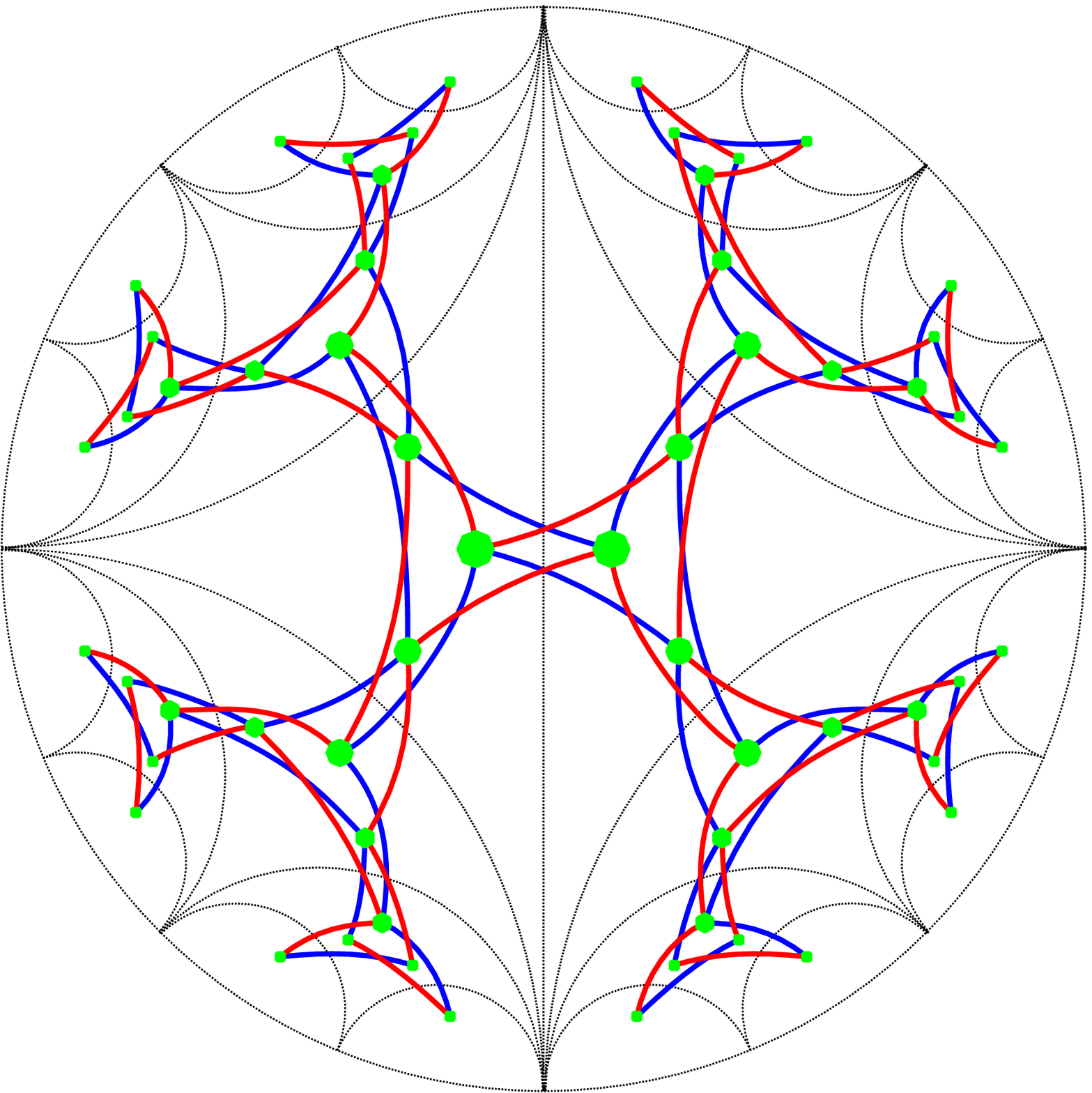}
  \caption{}
  \label{fig:g2} 
  \end{center}
\end{figure}

\section{Train tracks on the 4-punctured sphere}\label{sec:4sphere}
Let $S_{0,4}$ be the 4-punctured sphere.
A train track complex of $S_{0,4}$ is similar to that of the once-punctured torus but more complicated. 

Orientation preserving $C^1$--diffeomorphism classes of a complete train tracks $\tau$ 
depends on combination of switches and branches. 
By Proposition \ref{cor:switch}, number of switches and branches of $\tau$ are constants. 
Thus, number of orientation preserving $C^1$--diffeomorphism classes of the complete train tracks is finite. 
In fact, complete train tracks on $S_{0,4}$ are classified into 13 classes,
illustrated in (1) to (8) of Figure \ref{fig:04-tts} and their mirror images, 
though (1), (4) and (8) can move these mirrors by orientation preserving $C^1$-diffeomorphism.
 
\begin{figure}[htbp]
  \begin{center}
    \includegraphics[scale=0.92]{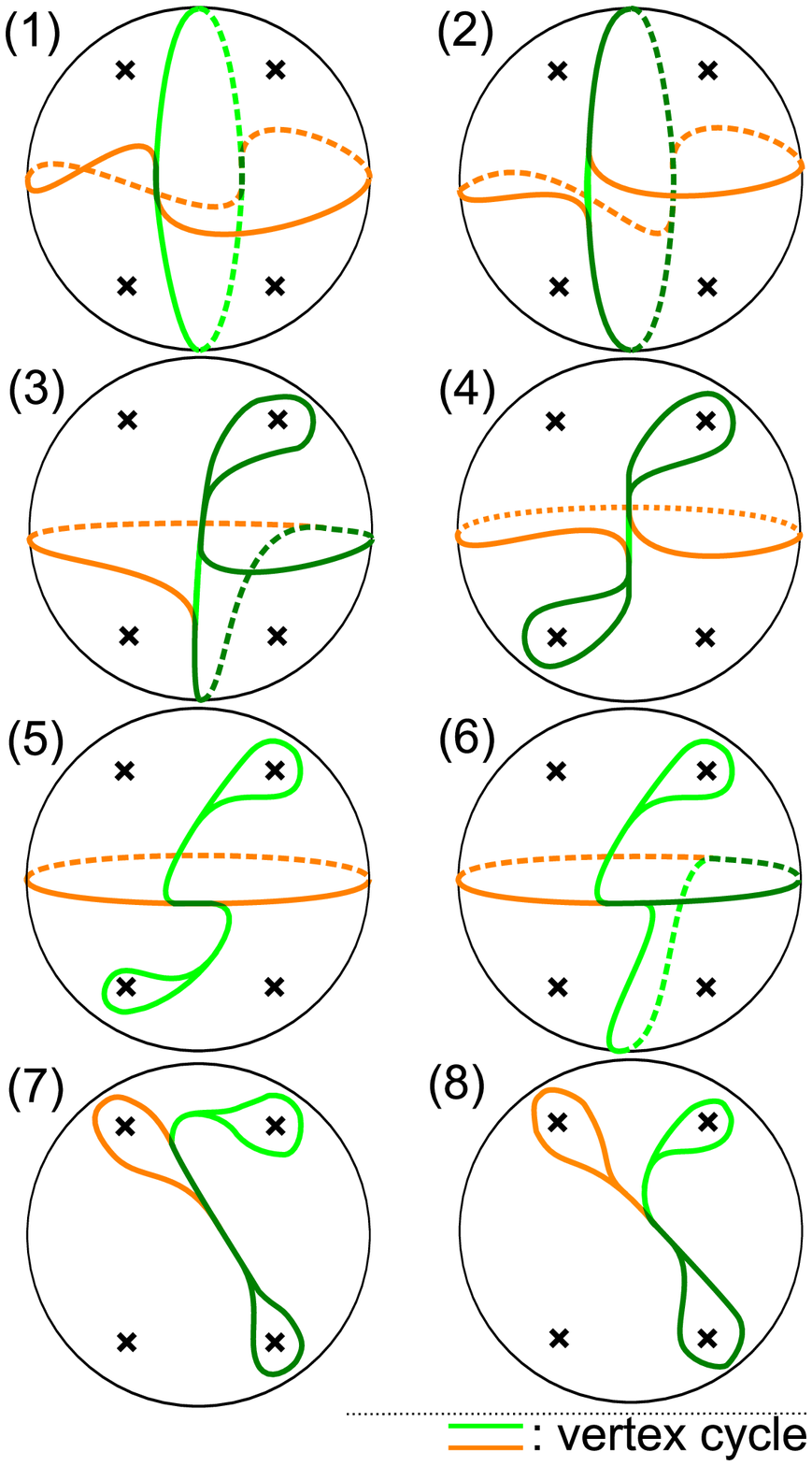}
  \end{center}
  \caption{}
  \label{fig:04-tts}
\end{figure}
We can see that all of those train tracks have exactly two vertex cycles $c, c'$
whose intersection number $i(c,c')$ equals $2$ or $4$.

First, we confirm connectivity of $\mathcal{TT}(S_{0,4})$:
\begin{prop}\label{thm:connect}
$\mathcal{TT}(S_{0,4})$ is connected.
\end{prop}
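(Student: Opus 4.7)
The plan is to mimic the connectivity argument of Lemma \ref{lemma:connect}, adapted to the richer combinatorics on $S_{0,4}$. Define $\varphi$ on $V(\mathcal{TT}(S_{0,4}))$ by sending a complete train track $\tau$ to the unordered pair of its two vertex cycles. By the discussion preceding the statement, this pair always has intersection number $2$ or $4$, i.e.\ the pair lies at Farey distance $1$ or $2$ in $\mathcal{F}$. The idea is to show (a) each fibre of $\varphi$ is a finite connected subgraph of $\mathcal{TT}(S_{0,4})$, and (b) fibres corresponding to Farey-adjacent curve pairs are joined by a single split. Combined with the connectedness of $\mathcal{F}$ (Section \ref{sec:farey}), this yields connectedness of $\mathcal{TT}(S_{0,4})$.

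The concrete work is a finite case check. For each of the 13 diffeomorphism types in Figure \ref{fig:04-tts}, I would identify the large branches and, for each large branch, draw the resulting train track(s) after a left or right split, recording the effect on the pair of vertex cycles. Since the number of types is finite and each has a bounded number of large branches, this reduces the problem to a finite tabulation that can be read off the figure. The mapping class group $\mathcal{M}(S_{0,4})$ acts isometrically on $\mathcal{TT}(S_{0,4})$ and transitively on pairs of curves of fixed intersection number, which shrinks the verification to one representative per type and fixes the vertex-cycle pair once and for all during the analysis of a single fibre.

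Using the tabulated data, step (a) amounts to exhibiting, in each fibre, an explicit sequence of splits joining any two representatives (analogous to the length-$3$ path in Lemma \ref{lemma:connect}); step (b) amounts to finding, for each fibre $\varphi^{-1}(\{c,c'\})$ and each Farey-neighbour $c''$ of $c$ or $c'$, at least one representative whose split yields a train track in $\varphi^{-1}(\{c,c''\})$ or into an intermediate fibre at Farey distance $2$ from it.

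The main obstacle is the intersection-$4$ classes, whose vertex-cycle pairs are not edges of $\mathcal{F}$. There, a single split need not land in a Farey-adjacent fibre, so one must verify by hand that from each such configuration a short bounded sequence of splits enters a fibre of intersection $2$ that is in turn Farey-adjacent to the desired target. Beyond this, the argument is pure combinatorial bookkeeping on the 13 types, and connectivity follows by patching the internally connected fibres along the connected graph $\mathcal{F}$.
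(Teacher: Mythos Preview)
Your strategy is workable but takes a different route from the paper. Rather than analyse all fibres of a vertex-cycle map, the paper singles out the class $A_1$ of tracks of combinatorial type (1) in Figure~\ref{fig:04-tts}, builds an auxiliary graph $T_1$ on $A_1$ whose edges record a pair of consecutive splits at the two large branches, and shows (Lemma~\ref{lemma:g2}) that $T_1$ is connected by direct analogy with the once-punctured torus. A single figure (Lemma~\ref{lemma:split}) then verifies that each of the 13 diffeomorphism types reaches $A_1$ in at most five splits, and connectedness follows immediately. This spine-and-neighbourhood argument is leaner than your fibre-by-fibre scheme: it replaces the checks that every fibre is internally linked and that all Farey-adjacent fibres are joined by a single bounded-distance check to one distinguished connected subgraph; in particular it sidesteps your ``main obstacle'' with the intersection-$4$ classes, since those are simply among the types shown to reach $A_1$ in $\le 5$ splits.

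One small correction: your claim (a) as literally stated---that each fibre is a connected \emph{subgraph} of $\mathcal{TT}(S_{0,4})$---is already false on the torus, where the two tracks sharing a vertex-cycle pair are joined only by the length-$3$ path of Lemma~\ref{lemma:connect}, which leaves the fibre. Your later gloss (an explicit sequence of splits joining any two representatives) is the correct formulation and is what your argument actually needs.
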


First we look at a $C^1$--diffeomorphism class which has two large edges 
and whose two vertex cycles intersect at two points ( (1) of Figure \ref{fig:04-tts} ). 
We write $A_1$ for the collection of these train tracks.

Let $\tau \in A_1$.
$\tau$ can split at two large edges $b_1$, $b_2$. 
We can get another complete train track $\tau_1$ by a right split $\tau$ at $b_1$ 
($\tau_1$ is $C^1$--diffeomorphic to (2) of Figure \ref{fig:04-tts}). 
$\tau_1$ can be right split at $b_2$ to a complete train track $\tau'$, and we can find that $\tau' \in A_1$. 
Incidentally, if we left split $\tau_1$ at $b_2$, 
we cannot get a complete train track (see Figure \ref{fig:04-splits}). 
In the same way, we can get $\tau'' \in A_1$ by being left splits $\tau$ at both $b_1$ and $b_2$. 
That is to say, we can get two complete train tracks $\tau', \tau'' \in A_1$ 
by splits at both $b_1$ and $b_2$ of $\tau$. 
\begin{figure}[htb]
  \begin{center}
    \includegraphics[keepaspectratio=true,width=110mm]{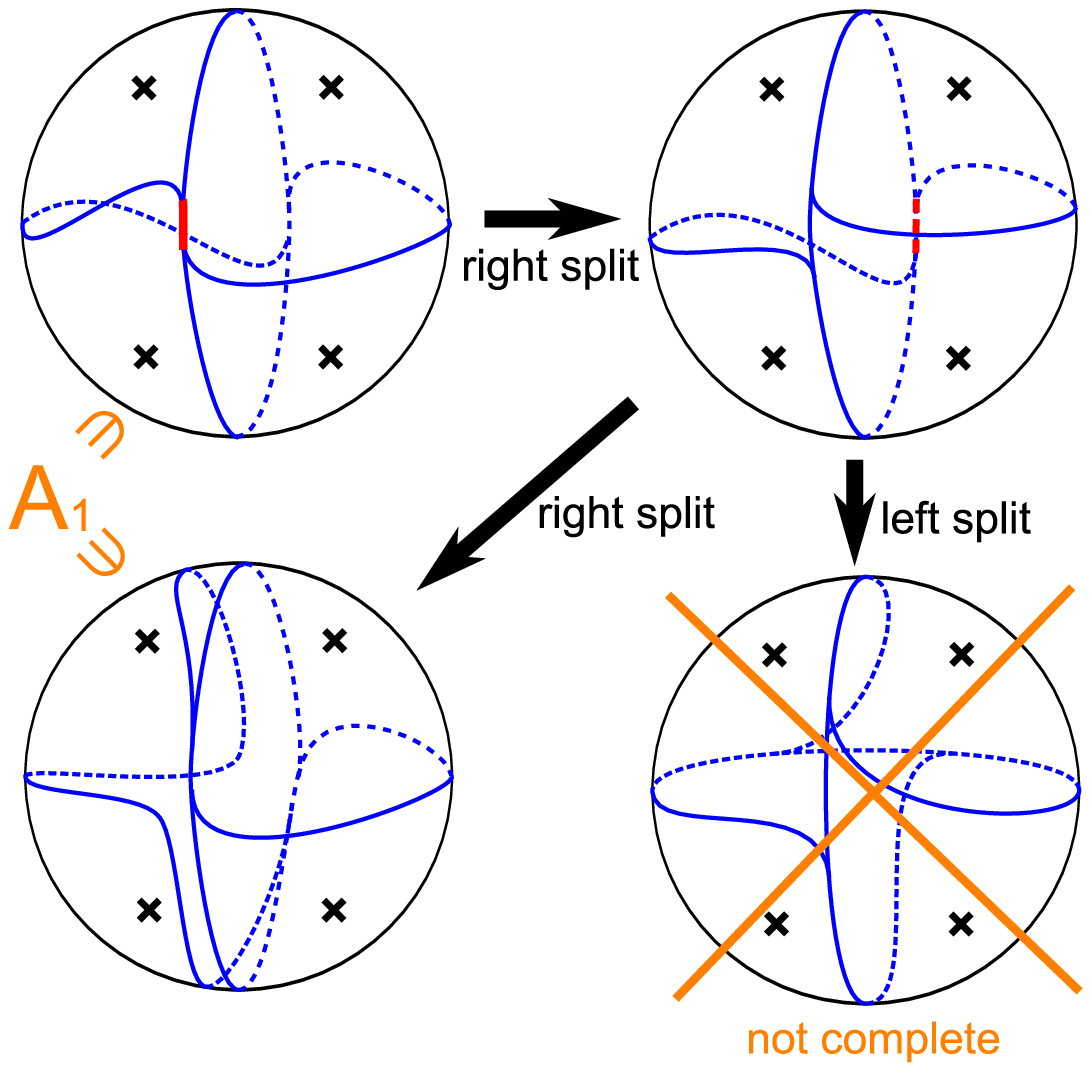}
  \end{center}
  \caption{}
  \label{fig:04-splits}
\end{figure}

We construct graph $T_1$ as follow: 
$V(T_1)$ is $A_1$. 
We connect $\tau, \tau' \in A$ by an edge 
if $\tau'$ can be obtained by splits at two larges edges of $\tau$.
Clearly, $T_1$ is homeomorphic to subgraph of $\mathcal{TT}(S_{0,4})$. 

\begin{lemma}\label{lemma:g2}
$T_1$ is connected and is quasi-isometric to the dual of $\mathcal{F}$. 
\end{lemma}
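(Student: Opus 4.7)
The plan is to adapt the strategy of Lemma \ref{lemma:QuasiG1} to $T_1$ by recording the unordered pair of vertex cycles of each train track in $A_1$. Every $\tau \in A_1$ has two vertex cycles $c_1, c_2$ with $i(c_1,c_2) = 2$, and hence they span an edge in $\mathcal{F}$, so I would define $\psi : A_1 \to E(\mathcal{F})$ by sending $\tau$ to this edge. Since $\mathcal{M}(S_{0,4})$ acts transitively on $A_1$ and on pairs of curves of a given intersection number, $\psi$ should be surjective with all fibres of the same finite cardinality. Extending $\psi$ linearly over edges of $T_1$ then gives a candidate quasi-isometry onto a graph $G'$ with vertex set $E(\mathcal{F})$.

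The next step is a local split analysis. Given $\tau \in A_1$ with large branches $b_1, b_2$, there are four combinations of left/right splits at $b_1$ and $b_2$; by the discussion preceding the statement of the lemma and Figure \ref{fig:04-splits}, precisely two of them, $\tau'$ and $\tau''$, yield complete train tracks, and these lie again in $A_1$. Tracking the vertex cycles through the two splits, exactly as in the proof of Lemma \ref{lemma:QuasiG1}, I expect $\psi(\tau)$, $\psi(\tau')$, $\psi(\tau'')$ to form the three sides of a common triangle in $\mathcal{F}$. This identifies $G'$ as the graph whose vertices are edges of $\mathcal{F}$ and in which two vertices are joined whenever the corresponding Farey edges lie in a common triangle, i.e. the line graph of the dual of $\mathcal{F}$, which is quasi-isometric to the dual of $\mathcal{F}$ by the same argument used at the end of Lemma \ref{lemma:QuasiG1}. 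Combined with uniform fibre-boundedness, this gives the quasi-isometry from $T_1$ to the dual of $\mathcal{F}$. Connectivity of $T_1$ then follows from connectivity of $\mathcal{F}$ together with the observation that any two elements of a common $\psi$-fibre can be joined through an adjacent fibre via the double-split moves just analysed.

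The main obstacle is the combinatorial verification of the split behaviour. One must check that the two admissible split combinations land back in $A_1$ rather than in one of the other twelve diffeomorphism classes illustrated in Figure \ref{fig:04-tts}, confirm that the forbidden combinations really fail to produce a complete train track, and follow each vertex cycle through the splits carefully enough to establish the Farey-triangle relation. This is an explicit case analysis at each of the two switches incident to $b_1$ and $b_2$, performed by inspecting how carrying maps and transverse measures behave under the split, and it is the step most prone to error.
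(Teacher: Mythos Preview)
Your proposal is correct and follows essentially the same route as the paper: the paper's proof simply observes that each $\tau\in V(T_1)$ has two vertex cycles spanning a Farey edge and then says the analysis is ``just the same as $\mathcal{TT}(S_{1,1})$'', i.e.\ it invokes the argument of Lemma~\ref{lemma:QuasiG1} verbatim, which is exactly what you unpack in detail. The combinatorial verification you flag as the main obstacle (that the two admissible double splits land back in $A_1$ and the mixed ones do not) is carried out in the paper in the paragraph and Figure~\ref{fig:04-splits} \emph{preceding} the lemma, so by the time the lemma is stated this is already in hand and need not be redone inside the proof.
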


\begin{proof}
Let $\tau \in V(T_1)$. 
$\tau$ has two vertex cycles connected by an edge in $\mathcal{F}$.
Thus, we can think it just the same as $\mathcal{TT}(S_{1,1})$.
As a result, we can get this Lemma.
\end{proof}

\begin{lemma}\label{lemma:split}
Let $\tau$ be any complete train track of $S_{0,4}$. 
Then there is $\tau' \in V(T_1)$ obtained from $\tau$ by at most 5 splits. 
\end{lemma}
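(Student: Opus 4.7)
My plan is a finite combinatorial case analysis over the 13 orientation-preserving $C^1$-diffeomorphism classes of complete train tracks on $S_{0,4}$ enumerated in Figure \ref{fig:04-tts}. The key observation is that the outcome of a split depends, up to $C^1$-diffeomorphism, only on the class of $\tau$, the chosen large branch within $\tau$, and whether the split is a left or a right one. Consequently, the splitting operation descends to a finite ``transition graph'' $H$ whose vertices are the 13 classes and in which classes $X$ and $Y$ are joined by an edge whenever some $\sigma \in X$ admits a complete split landing in $Y$. Because the set of classes reachable by one split from $\tau$ depends only on the class of $\tau$, any path of length $k$ in $H$ from the class of $\tau$ to $A_1$ lifts to an honest sequence of $k$ splits in $\mathcal{TT}(S_{0,4})$ ending at some $\tau' \in A_1$.

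To build $H$ explicitly, I would proceed through each class other than $A_1$, mark its large branches on the corresponding picture of Figure \ref{fig:04-tts}, and, for every large branch, carry out the left and right splits. Outcomes that fail to be complete (as occurs in Figure \ref{fig:04-splits}) are discarded, and each of the remaining outcomes is identified with one of the 13 listed classes. This is precisely the same type of verification already carried out in the construction of $T_1$, now performed systematically for every class and every large branch.

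Once $H$ is in hand, the lemma reduces to a breadth-first search from the vertex $A_1$: the claim is exactly that the eccentricity of $A_1$ in $H$ is at most $5$. The principal obstacle is bookkeeping rather than ideas: at each step one must check that the branch being split is indeed large, that the resulting train track is complete (birecurrent and not a proper subtrack of any birecurrent train track), and that its diffeomorphism class has been identified correctly from Figure \ref{fig:04-tts}. There is no deeper conceptual difficulty; after this careful enumeration, the bound $5$ comes out of the BFS.
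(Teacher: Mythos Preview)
Your approach is correct and essentially matches the paper's: both reduce the problem to a finite case analysis over the 13 diffeomorphism classes, using that the mapping class group acts transitively on each class so that $d(\tau,V(T_1))$ depends only on the class of $\tau$. The paper simply exhibits, for one representative of each class, an explicit split sequence of length at most~5 into $A_1$ (Figure~\ref{fig:04-tts-split}), while you package the same computation as a breadth-first search in the transition graph $H$; the content is identical.
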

\begin{proof}
We can easily see that
each $C^1$--diffeomorphism class of $V(\mathcal{TT}(S_{0,4}))$ has a train track $\tau$
which implements $d(\tau, V(T_1)) \leq 5$ (see Figure \ref{fig:04-tts-split}).
Meanwhile, $d(\tau, V(T_1))$ depends only on a $C^1$--diffeomorphism class of $\tau$, 
because the mapping class group $\mathcal{M}(S_{0,4})$ acts isometrically on $\mathcal{TT}(S_{1,1})$.
Now this Lemma is proved.
\end{proof}

\begin{figure}[htbp]
  \begin{center}
    \includegraphics[scale=0.83]{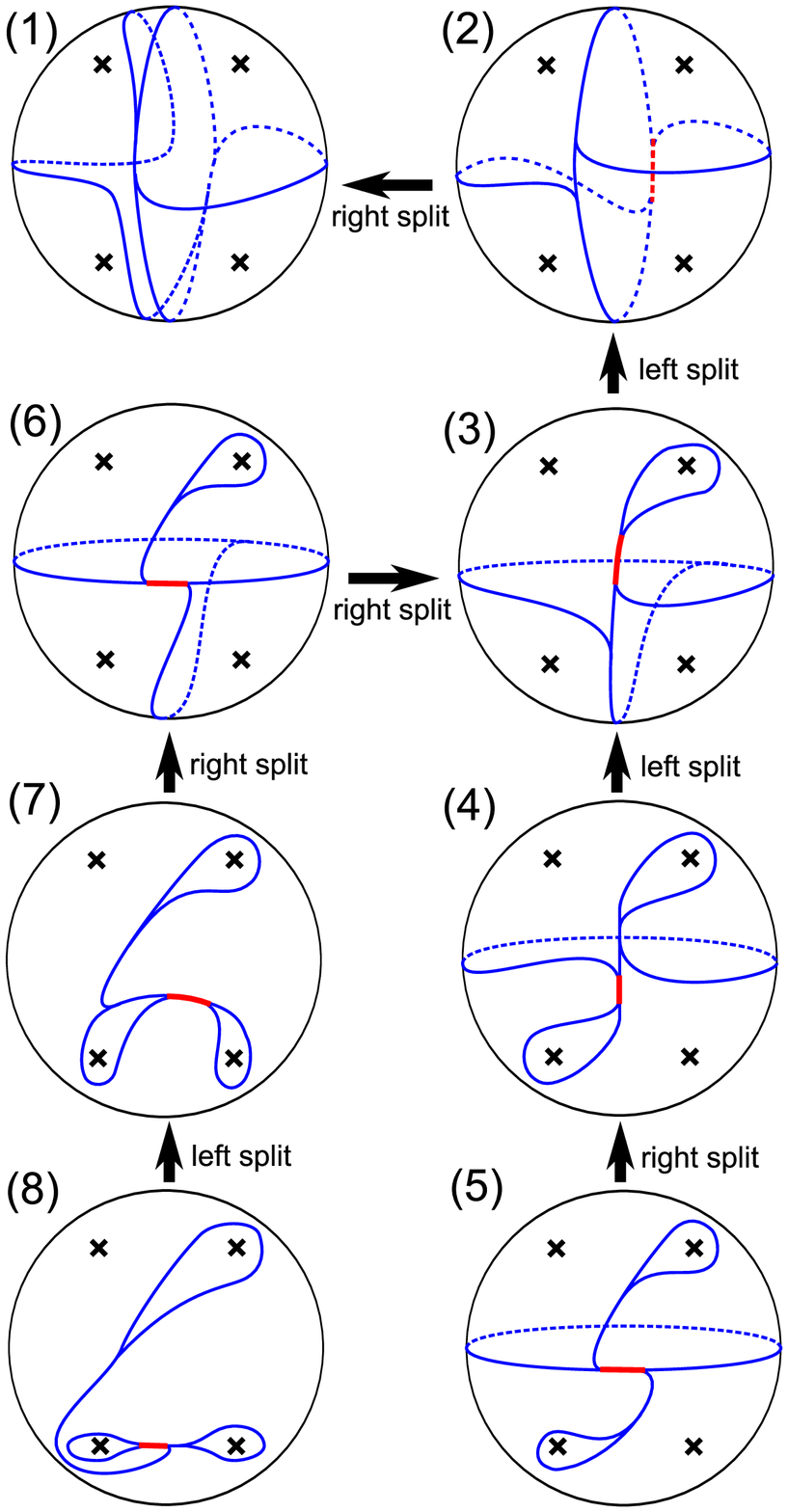}
  \end{center}
  \caption{}
  \label{fig:04-tts-split}
\end{figure}

\begin{proof}[Proof of Proposition \ref{thm:connect}]
We obtained this theorem by Lemma \ref{lemma:g2} and \ref{lemma:split}.
\end{proof}

Similar to the construction of $\varphi$ in Section \ref{sec:torus},
we construct the map $\psi : V(\mathcal{TT}(S_{0,4})) \rightarrow E(\mathcal{F})$ as follow :
Let $\tau \in V(\mathcal{TT}(S_{1,1}))$. 
Suppose $c_1, c_2$ are vertex cycles on $\tau$.
If $i(c_1, c_2) = 2$, there is an edge $e$ which connects $c_1$ and $c_2$ in $\mathcal{F}$. 
We define $\psi(\tau)$ as $e$.
If $i(c_1, c_2) = 4$, 
there are two simple closed curves $c_3, c_4$ that implement $i(c_j, c_k)=2$ $(j=1,2, \ k=3,4)$, $i(c_3, c_4)=2$. 
We define $\psi(\tau)$ as the edge which connects $c_3$ and $c_4$. 
We construct graph $G_2$ as the same as $G_1$ of Section \ref{sec:torus} and　
extends to $\psi:\mathcal{TT}(S_{0,4}) \rightarrow G_2$.　

\begin{lemma}\label{lemma:QuasiG3}
$G_2$ is quasi-isometric to the dual of the Farey graph.　
\end{lemma}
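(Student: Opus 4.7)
The plan is to mimic the proof of Lemma \ref{lemma:QuasiG1}, with the subgraph $T_1$ of $\mathcal{TT}(S_{0,4})$ playing the role of $\mathcal{TT}(S_{1,1})$, and then to absorb the remaining $12$ diffeomorphism classes via Lemma \ref{lemma:split}. Since Lemma \ref{lemma:g2} already gives that $T_1$ is quasi-isometric to the dual of $\mathcal{F}$, it will suffice to exhibit a quasi-isometry between $T_1$ (or its natural image in $G_2$) and $G_2$ itself.

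The candidate map is $\bar{\psi} := \psi|_{A_1} : V(T_1) \to V(G_2)$. For $\tau \in A_1$ the two vertex cycles satisfy $i(c_1, c_2) = 2$, so $\bar{\psi}(\tau) = \{c_1, c_2\}$ is simply the Farey edge joining them. A compound split of $\tau$ at its two large branches produces another train track in $A_1$ whose pair of vertex cycles shares a Farey triangle with $\{c_1, c_2\}$, exactly as in the $S_{1,1}$ analysis; hence edges of $T_1$ are sent to edges of $G_2$. Surjectivity of $\bar{\psi}$ on vertices follows from the transitivity of $\mathcal{M}(S_{0,4})$ on $E(\mathcal{F})$ together with the non-emptiness of $A_1$, while each fiber $\bar{\psi}^{-1}(e)$ has bounded cardinality because only finitely many train tracks in $A_1$ realize a given unordered pair of vertex cycles (as in $\mathcal{TT}(S_{1,1})$). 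The bi-Lipschitz estimate is then established verbatim as in Lemmas \ref{lemma:QuasiG1} and \ref{lemma:QuasiPhi}, yielding that $\bar{\psi}$ is a quasi-isometry from $T_1$ onto its image.

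To conclude, I will check that the edges of $G_2$ coming from splits involving train tracks outside $A_1$ do not change the quasi-isometry type. Lemma \ref{lemma:split} guarantees that every complete train track on $S_{0,4}$ lies within $5$ splits of $A_1$, so any extra edge of $G_2$ connects two vertices that are within uniformly bounded $G_2$-distance of vertices in $\bar{\psi}(V(T_1))$. Hence $\bar{\psi}(T_1)$ is cobounded in $G_2$, and the inclusion $\bar{\psi}(T_1) \hookrightarrow G_2$ is a quasi-isometry. Chaining this with Lemma \ref{lemma:g2} produces the desired quasi-isometry between $G_2$ and the dual of $\mathcal{F}$.

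The hardest part will be the case analysis supporting the above claims: for each of the $13$ diffeomorphism classes of Figure \ref{fig:04-tts} one must verify how a single split transports the $\psi$-image, and in the classes whose vertex cycles intersect in $4$ points, where $\psi$ is defined via the auxiliary curves $c_3, c_4$ of a Farey quadrilateral, the motion of $c_3, c_4$ under splits must be tracked explicitly. The $\mathcal{M}(S_{0,4})$-action on $E(\mathcal{F})$ reduces this to finitely many local diagrams, but the bookkeeping for the four-intersection classes is the place where care is required.
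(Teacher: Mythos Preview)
Your route through $T_1$ and Lemma~\ref{lemma:split} is more circuitous than the paper's, and one step does not go through as written. The paper argues directly: for each of the thirteen diffeomorphism classes it tabulates how a single split moves the pair of vertex cycles, and finds that $\psi(\tau)$ and $\psi(\tau')$ are always the same Farey edge, adjacent Farey edges, or next-but-one Farey edges. Thus $V(G_2)=V(G_1)$, every $G_1$-edge is a $G_2$-edge, and every $G_2$-edge joins vertices at $G_1$-distance at most~$2$; so $G_2$ and $G_1$ are bilipschitz, and Lemma~\ref{lemma:QuasiG1} finishes. No appeal to $T_1$ or Lemma~\ref{lemma:split} is made.

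The specific gap in your outline is the inference ``$\bar\psi(T_1)$ is cobounded in $G_2$, hence the inclusion $\bar\psi(T_1)\hookrightarrow G_2$ is a quasi-isometry''. Since $\bar\psi$ is surjective on vertices, coboundedness is automatic and says nothing about metric comparability: a spanning tree of a graph is $0$-cobounded yet almost never quasi-isometrically embedded. What you actually need is that the two endpoints of every $G_2$-edge lie at uniformly bounded distance \emph{in the subgraph} $\bar\psi(T_1)$. Lemma~\ref{lemma:split} does not supply this: it bounds $\mathcal{TT}$-distance from an arbitrary train track to $A_1$, but pushing that bound forward by $\psi$ yields only a $G_2$-distance, not a $\bar\psi(T_1)$-distance. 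For the same reason the ``bi-Lipschitz estimate verbatim as in Lemma~\ref{lemma:QuasiPhi}'' does not transfer: a $G_2$-geodesic need not lift to $T_1$, because $G_2$-edges arise from splits among all thirteen classes, not just those in $A_1$. The case analysis you defer to your final paragraph---tracking $\psi$ under a single split for every class---is precisely what closes this gap, and it is exactly the paper's argument; once it is carried out, the conclusion $G_2\simeq G_1$ follows immediately and the detour through $T_1$ becomes superfluous.
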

\begin{proof}
It is possible to think this just as $G_1$.　

Let $\tau \in V(\mathcal{TT}(S_{0,4}))$ and $\tau'$ obtained by a single split of $\tau$.
The relation between vertex cycles on $\tau$ and $\tau'$ can fall into the following 3 types
(see also Table \ref{table:relation}):
\begin{enumerate}
\item $\tau$ and $\tau'$ have the same vertex cycles.
Thus $\phi(\tau)$ and $\phi(\tau')$ are the same edge in $\mathcal{F}$.
\item One vertex cycle $c_1$ on $\tau$ and $c_1'$ on $\tau'$ are the same.
Another vertex cycles $c_2$ on $\tau$ and  $c_2'$ on $\tau'$ intersect at 2 points.
Thus $\phi(\tau)$ and $\phi(\tau')$ are an adjacent edge in $\mathcal{F}$. 
\item One vertex cycle $c_1$ on $\tau$ and $c_1'$ on $\tau'$ are the same. 
Another vertex cycles $c_2$ on $\tau$ and  $c_2'$ on $\tau'$ intersect at 4 points. 
Thus $\phi(\tau)$ and $\phi(\tau')$ are the next but one edge in $\mathcal{F}$. 
\end{enumerate}

So, the edges of $G_2$ connect adjacent or next but one edges in $\mathcal{F}$. 
Thus, $G_2$ is quasi-isometric to $G_1$ and the dual of $\mathcal{F}$.

\begin{table}[htb]
\begin{center}
\begin{tabular}{ccc}
\thline
diffeo. class of $\tau$ & diffeo. class of $\tau'$
	& \shortstack{relation between  \\ $\psi(\tau)$ and $\psi(\tau')$ in $\mathcal{F}$} \\ \thline\thline
(1) of Fig.\ref{fig:04-tts} & (2) & adjacent edges \\ \thline
(2) & (1) & a same edge \\ \thline
(3) & (2) & a same edge \\ \thline
(4) & (3) & adjacent edges \\ \thline
(5) & (4) & adjacent edges \\ \cline{2-3}
& (5) & next but one edges \\ \thline
(6) & (3) & adjacent edges \\ \cline{2-3}
& (6) & adjacent edges \\ \thline
(7) & (6) & a same edge \\ \thline
(8) & (7) & adjacent edges \\ \thline
\end{tabular}
\caption{relation between $\tau$ and its split $\tau'$ in $\mathcal{F}$}
\label{table:relation}
\end{center}
\end{table}

\end{proof}

We note that $G_2$ is isomorphic to Figure \ref{fig:g3}.

\begin{figure}[tb]
  \begin{center}
    \includegraphics[keepaspectratio=true,width=80mm]{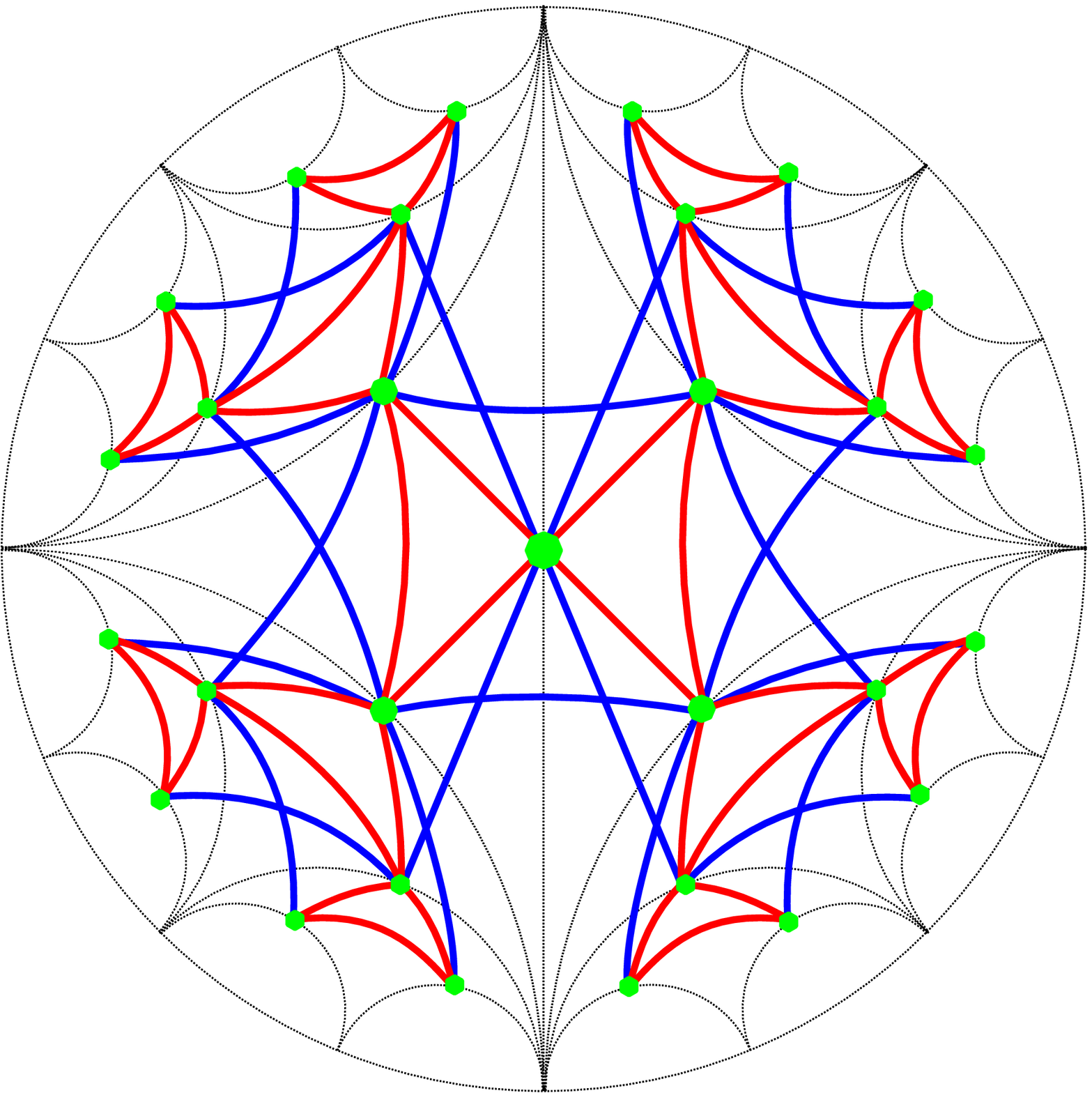}
  \end{center}
  \caption{$G_2$}
  \label{fig:g3}
\end{figure}

\begin{lemma}\label{lemma:QuasiPsi}
$\psi:\mathcal{TT}(S_{0,4}) \rightarrow G_2$ is a quasi-isometry.
\end{lemma}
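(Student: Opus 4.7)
The plan is to mirror the proof of Lemma \ref{lemma:QuasiPhi} from the once-punctured torus case. Three things must be verified for $\psi$ to be a quasi-isometry: an upper Lipschitz bound, a lower Lipschitz bound, and coboundedness of the image. Coboundedness is the easiest: I would argue that $\psi$ is in fact surjective onto $V(G_2) = E(\mathcal{F})$, because for any edge $e = \{c,c'\}$ of the Farey graph the diffeomorphism class (1) of Figure \ref{fig:04-tts} contains a complete train track whose vertex cycles are exactly $c$ and $c'$. The upper Lipschitz bound reduces to showing that if $\tau,\tau'$ are joined by a single split then $d_{G_2}(\psi(\tau),\psi(\tau')) \leq 1$, and this is essentially the content of the case analysis (i)--(iii) already performed in the proof of Lemma \ref{lemma:QuasiG3}: in every case $\psi(\tau)$ and $\psi(\tau')$ are either equal or adjacent (or next-but-one, which is still a $G_2$-edge by construction) in $\mathcal{F}$. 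Iterating along a $\mathcal{TT}$-geodesic yields $d_{G_2}(\psi(\tau),\psi(\tau')) \leq d_{\mathcal{TT}(S_{0,4})}(\tau,\tau')$.

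The decisive step is the lower bound, for which I would establish a universal constant $D$ with $\operatorname{diam}_{\mathcal{TT}(S_{0,4})} \psi^{-1}(e) \leq D$ for every $e \in E(\mathcal{F})$. Because the mapping class group $\mathcal{M}(S_{0,4})$ acts isometrically on $\mathcal{TT}(S_{0,4})$ and on $\mathcal{F}$, transitively on $E(\mathcal{F})$, and because $\psi$ is $\mathcal{M}(S_{0,4})$-equivariant, it suffices to bound the diameter of a single fiber. Fixing $e = \{c_3,c_4\}$, the fiber decomposes into train tracks whose vertex cycles are $\{c_3,c_4\}$ (coming from classes (1), (2), (3), (7)) and train tracks whose two vertex cycles $\{c_1,c_2\}$ satisfy $i(c_1,c_2)=4$ with $\{c_3,c_4\}$ as the intermediate pair (classes (4), (5), (6), (8)). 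For each fixed choice of vertex cycles only finitely many complete train tracks carry them, so the fiber is finite, and Table \ref{table:relation} together with the connectivity argument used in Proposition \ref{thm:connect} lets one exhibit a short path inside $\mathcal{TT}(S_{0,4})$ between any two of them, giving an explicit $D$.

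Given such a $D$, the lower bound follows by the same concatenation as in Lemma \ref{lemma:QuasiPhi}. Pick a $G_2$-geodesic $e_0 = \psi(\tau), e_1, \ldots, e_n = \psi(\tau')$, and for each $i$ choose $\sigma_i \in \psi^{-1}(e_i)$ and $\sigma_i' \in \psi^{-1}(e_{i+1})$ adjacent in $\mathcal{TT}(S_{0,4})$ via the definition of $G_2$. Concatenating the fiber-bounded detours along $\tau \to \sigma_0 \to \sigma_0' \to \sigma_1 \to \cdots \to \sigma_{n-1}' \to \tau'$ yields a path of length at most $(D+1)n + D$, so
\[
d_{\mathcal{TT}(S_{0,4})}(\tau,\tau') \leq (D+1)\, d_{G_2}(\psi(\tau),\psi(\tau')) + D,
\]
which combined with the upper bound establishes the quasi-isometry.

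The principal obstacle is the fiber-diameter bound. One cannot invoke Corollary \ref{cor:pdc} to bypass it, since that corollary is deduced later using precisely this lemma; the bound must instead be extracted by an honest inspection of the classification in Figure \ref{fig:04-tts}. I expect this to be a tedious but entirely mechanical case check, most of whose bookkeeping is already done in Table \ref{table:relation}.
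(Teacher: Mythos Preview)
Your approach is correct and matches the paper's proof almost exactly: the paper also reduces everything to a uniform fiber-diameter bound $a := \operatorname{diam}(\psi^{-1}(e))$ and then concludes $d(\psi(\tau),\psi(\tau')) \le d(\tau,\tau') \le (a+1)\,d(\psi(\tau),\psi(\tau')) + a$ by the same concatenation you describe.

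The one place you overcomplicate things is the fiber-diameter bound itself. The paper does not carry out the ``tedious but entirely mechanical case check'' you anticipate; it simply observes that $\psi^{-1}(e)$ is finite (only finitely many complete train tracks have a prescribed pair of vertex cycles), so its diameter in the already-connected graph $\mathcal{TT}(S_{0,4})$ (Proposition~\ref{thm:connect}) is some finite number, and then equivariance under the transitive $\mathcal{M}(S_{0,4})$-action on $V(G_2)=E(\mathcal{F})$ makes that number independent of $e$. No explicit value of $a$ is ever needed or computed, and no further inspection of Figure~\ref{fig:04-tts} or Table~\ref{table:relation} is required.
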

\begin{proof}
It can be proved in the same way as in Lemma \ref{lemma:QuasiPhi}.

Let $\tau, \tau' \in V(\mathcal{TT}(S_{0,4}))$. 
There are only finitely many complete train tracks if vertex cycles are fixed.
Thus, $\psi^{-1}(e)$ is finite. 
Since $\mathcal{M}(S_{0,4})$ acts isometrically on $\mathcal{TT}(S_{1,1})$ 
and acts transitively on $V(G_2)$, 
$a:=diam(\psi^{-1}(e))$ is constant for all $e$.
It follows that 
$d(\psi(\tau), \psi(\tau')) \leq d(\tau, \tau') \leq (a+1)d(\psi(\tau), \psi(\tau')) + a$
and $\psi$ is a quasi-isometry.
\end{proof}

\begin{proof}[Proof of Theorem \ref{thm:main} ($4$-punctured sphere case)]
By Lemma \ref{lemma:QuasiG3} and \ref{lemma:QuasiPsi}, 
$\mathcal{TT}(S_{0,1})$ is quasi-isometric to the dual of the Farey graph.
\end{proof}

\section{Action of the mapping class group}\label{sec:act}
It is well known that $\mathcal{M}(S_{1,1})$ is isomorphic to $SL(2, \mathbb{Z})$ (see for instance \cite{Tak:01}).
Also $\mathcal{M}(S_{0,4})$ has a subgroup of finite index which is isomorphic $PSL(2, \mathbb{Z})$.

First, we prove Collolary\ref{cor:pdc} and \ref{cor:qi}

\begin{proof}[Proof of Collolary\ref{cor:pdc}]
Let $\tau$ be a complete train track on the once-punctured torus $S_{1,1}$.
The train track complex is locally finite. 
The stabilizer $stab(\tau)$ under the action of $\mathcal{M}(S_{1,1})$ is finite. 
Thus, $\{ \sigma \in \mathcal{M}(S_{1,1}) \mid d(\tau, \sigma\tau) \leq r \}$ 
is finite for all $r \geq 0$. 
It follows that the action of $\mathcal{M}(S_{1,1})$ on $\mathcal{TT}(S_{1,1})$ 
is properly discontinuous.

$\mathcal{M}(S_{1,1})$ acts transitively on $V(\mathcal{TT}(S_{1,1}))$, 
since all complete train tracks on $S_{1,1}$ are $C^1$--diffeomorphism. 
It follows that the orbit $\mathcal{M}(S_{1,1})\tau = V(\mathcal{TT}(S_{1,1}))$ and thus 
$N(\mathcal{M}(S_{1,1})\tau, 1) = \mathcal{TT}(S_{1,1})$.
That is to say, any orbit $\mathcal{M}(S_{1,1})\tau$ is cobounded. 
By Proposition \ref{ex:bow}, the action of $\mathcal{M}(S_{1,1})$ on $\mathcal{TT}(S_{1,1})$ is cocompact. 

Let $\tau$ be a complete train track on the 4-punctured sphere $S_{0,4}$. 
Just as in the case of $\mathcal{M}(S_{1,1})$, 
the action of $\mathcal{M}(S_{0,4})$ on $\mathcal{TT}(S_{0,4})$ is properly discontinuous. 

$\mathcal{M}(S_{0,4})$ acts transitively on $V(T_1)$. 
Thus the orbit $\mathcal{M}(S_{0,4})\tau$ of $\tau \in V(T_1)$ is $V(T_1)$. 
By Lemma \ref{lemma:split}, $N(\mathcal{M}(V(T_1), 6) = \mathcal{TT}(S_{0,4})$. 
So, some orbit is cobounded. 
By Proposition \ref{ex:bow}, 
the action of $\mathcal{M}(S_{0,4})$ on $\mathcal{TT}(S_{0,4})$ is cocompact. 
\end{proof}

\begin{proof}[Proof of Collolary\ref{cor:qi}]
The train track complex $\mathcal{TT}(S)$ is a locally finite graph.
Thus $\mathcal{TT}(S)$ is a proper space. 
Since the mapping class group $\mathcal{M}(S)$ is finitely generated,
by Theorem \ref{thm:bow} and Corollary \ref{cor:pdc}, 
$\mathcal{TT}(S_{1,1})$ is quasi-isometric to $\mathcal{M}(S)$. 
\end{proof}

As already stated in Section \ref{sec:torus},
$\mathcal{TT}(S_{1,1})$ is isomorphic to the Caley graph of $PSL(2,\mathbb{Z})$.
Similarly,
we can notice that $T_1$ in Section \ref{sec:4sphere} is isomorphic to $\mathcal{TT}(S_{1,1})$ 
and hence the Cayley graph of $PSL(2, \mathbb{Z})$.
Thus, $PSL(2, \mathbb{Z})$ acts freely and p.d.c. on $T_1$ and on $V(T_1)$ transitively.
Meanwhile, we can easily show that $\mathcal{M}(S_{0,4})$ acts p.d.c. on $T_1$ and
the stabilizer $stab(\tau)$ for $\tau \in T_1$ is isomorphic to 
dihedral group $\mathbb{Z}/2\mathbb{Z} \times \mathbb{Z}/2\mathbb{Z}$.
Hence index of $PSL(2, \mathbb{Z})$ on $\mathcal{M}(S_{0,4})$ equals $4$.



\begin{thebibliography}{99}
\bibitem[APS06]{Ken:06} Javier Aramayona, Hugo Parlier, and Kenneth J. Shackleton, 
	{\itshape Totally geodesic subgraphs of the pants complex},
	preprint,
	arXiv:math.GT/0608752v1.
\bibitem[Bow06]{Bow:06} Brian H. Bowditch,
	{\itshape A course on geometric group theory},
	MSJ Memoirs {\bf 16},
	2006.
\bibitem[Ham05a]{Ham:08} Ursula Hamenst{\"a}dt,
	{\itshape Geometry of the mapping class groups I: Boundary amenability},
	preprint,
	arXiv:math.GR/0510116v4.
\bibitem[Ham05b]{Ham:05} Ursula Hamenst{\"a}dt,
	{\itshape Train tracks and the Gromov boundary of the complex of curves},
	Spaces of Kleinian groups,
	London Math. Soc. Lec. Note Ser. {\bf 329},
	2005,
	187--207;
	arXiv:math.GT/0409611v2.
\bibitem[Ham06]{Ham:06} Ursula Hamenst{\"a}dt,
	{\itshape Geometric properties of the mapping class group},
	Proc. Sympos. Pure Math. {\bf 74},
	Amer. Math. Soc.,
	2006,
	215--232.
\bibitem[Har81]{Ha:81} W.J Harvey,
	{\itshape Boundary structure of the modular group},
	Proceedings of the 1978 Stony Brook Conference,
	Ann. of Math. Stud. {\bf 97},
	1981,
	245--251.
\bibitem[Min96]{Min:96} Yair N. Minsky
	{\itshape A geometric approach to the complex of curves on a surface},
	Topology and Teichm{\"u}ller Spaces: Katinkulta,
	World sci. Publ., 
	1996,
	149--158.
\bibitem[MM99]{MM:99} Howard A. Masur and Yair N. Minsky,
	{\itshape Geometry of the Complex of Curves {I}: Hyperbolicity},
	Invent. Math {\bf 138} (1999),
	103--149;\\
	arXiv:math.GT/9804098v2.
\bibitem[MM04]{MM:04} Howard A. Masur and Yair N. Minsky,
	{\itshape Quasiconvexity in the curve complex},
	In the tradition of Ahlfors and Bers, III,
	Contemp. Math. {\bf 335} (2004),
	309--320;
	arXiv:math.GT/0307083v1.
\bibitem[Pen92]{PH:92} R. C. Penner with J. L. Harer,
	{\itshape Combinatorics of Train Tracks},
	Ann. of Math. Stud. {\bf 125},
	1992.
\bibitem[Tak01]{Tak:01} M. Takasawa,
	{\itshape Enumeration of Mapping Classes for the Torus},
	Geom. Dedicata {\bf 85} (2001),
	11--19.
\bibitem[Thu78]{Th:80} William P. Thurston,
	{\itshape The Geometry and Topology of Three-Manifolds},
	Princeton Lecture Notes, 
	\\
	\url{http://www.msri.org/publications/books/gt3m/}.
\end{thebibliography}
\end{document}